\def\CE {\mathcal{E}}
\newcommand\reduline{\bgroup\markoverwith{\textcolor{red}{\rule[-0.5ex]{2pt}{0.4pt}}}\ULon}
\newcommand{\blue}[1]{\color{blue} #1}
\newcommand{\CD}{\mathcal{D}}
\newcommand{\CF}{\mathcal{F}}
\newcommand{\CL}{\mathcal{L}}
\newcommand{\CW}{\mathcal{W}}
\newcommand{\bi}{{\bf i}}
\newcommand{\bj}{{\bf j}}
\newcommand{\bk}{{\bf k}}
\newcommand{\bu}{{\mathbf u}}
\newcommand{\bv}{{\mathbf v}}
\newcommand{\bx}{{\mathbf x}}
\newcommand{\by}{{\mathbf y}}
\def\eps {\epsilon}
\newcommand{\N}{\mathbb{N}}
\newcommand{\RE}{\mathbb{R}}
\newcommand{\Z}{\mathbb{Z}}
\crefname{hypothesis}{Hypothesis}{Hypotheses}
\title{On the Computation of Kantorovich-Wasserstein Distances between 2D-Histograms by Uncapacitated Minimum Cost Flows
}
\author{Federico Bassetti\thanks{Politecnico di Milano, Dipartimento di Matematica
  (\email{federico.bassetti@polimi.it}).}
\and Stefano Gualandi
\thanks{Universit\`a degli Studi di Pavia, Dipartimento di Matematica ``F. Casorati"
  (\email{stefano.gualandi@unipv.it}, \email{marco.veneroni@unipv.it}).}
  \and Marco Veneroni\footnotemark[2]
}
\begin{document}

\maketitle

\begin{abstract}
In this work, we present a method to compute the Kantorovich-Wasserstein distance of order one between a pair of two-dimensional histograms. Recent works in Computer Vision and Machine Learning have shown the benefits of measuring Wasserstein distances of order one between histograms with $n$ bins, by solving a classical transportation problem on very large complete bipartite graphs with $n$ nodes and $n^2$ edges.
The main contribution of our work is to approximate the original transportation problem by an uncapacitated min cost flow problem on a reduced flow network of size $O(n)$ that exploits the geometric structure of the cost function. More precisely, when the distance among the bin centers is measured with the 1-norm or the $\infty$-norm, our approach provides an optimal solution. When the distance among bins is measured with the 2-norm:
(i) we derive a quantitative estimate on the error between optimal and approximate solution;
(ii) given the error, we construct a reduced flow network of size $O(n)$.
We numerically show the benefits of our approach by computing Wasserstein distances of order one on a set of grey scale images used as benchmark in the literature. We show how our approach scales with the size of the images with 1-norm, 2-norm and $\infty$-norm ground distances, and we compare it with other two methods which are largely used in the literature.
\end{abstract}

\begin{keywords}
  Kantorovich metric,  Wasserstein distance, Transportation Problem, Network Simplex, Uncapacitated Minimum Cost Flow Problem
\end{keywords}

\begin{AMS}
	90C06, 90C08
\end{AMS}

\section{Introduction}
The Transportation Problem, also called the Hitchcock-Koop\-mans transportation problem \cite{Flood1953}, is a historical problem for the mathematical programming community \cite{Schrijver2002}.
Indeed, the Transportation Problem is a particular case in the discrete setting of the more general Monge-Kantorovich Transportation Problem, largely studied in the functional analysis community (e.g., see \cite{Villani2008,AGS}),
since the pioneering work of the French mathematician Gaspard Monge in 1781 \cite{Monge1781}, and later
the fundamental work of L.V. Kantorovich on the theory of Linear Programming duality \cite{Vershik2013}.
In the discrete setting, the Transportation Problem played a fundamental role in the development of the (Network) Simplex algorithm and of all the related network flows problems \cite{Schrijver2002,Goldberg1989,Ahuja}.
In the continuous setting, the Transportation Problem has recently gathered a renewed interest in computer science and applied mathematics, as an efficient method to compute distances between probability measures.
We just mention here the attention gathered from the Machine Learning community
in the use of the Wasserstein distances within Generative Adversarial Networks (GAN) \cite{Arjovsky2017},
which was possible thanks to recent developments of very efficient numerical algorithms based on entropic
regularization \cite{Cuturi2013,Solomon2015,Chizat2016,Altschuler2017}.

In this paper, we focus on the Kantorovich metric, i.e. the Wasserstein (Vasershtein) distance of order 1, between two-dimensional histograms. By exploiting the geometric cost structure of the problem, we reduce the computation of Kantorovich distances to the problem of solving an uncapacitated minimum cost flow problem \cite{Orlin} on a graph with a prescribed topology. We consider three types of transportation costs among locations, induced by the $p$-norms with $p=1$, $p=2$, and $p=\infty$.
For each norm, we build a flow network with a prescribed topology and given costs: our aim is to keep the size of the flow network as small as possible
in order to accelerate the computation of Kantorovich distances, while limiting the use of computer memory.
In addition, for the 2-norm, we provide a flow network that permits to find an approximate solution, and we
derive a quantitative estimate on the error between optimal and approximate solution.

We stress the importance of having fast methods to compute exact and approximate Kantorovich distances.
As discussed next,
these distances are used as subproblems of more general tasks, and they are usually (re)computed again, and again, and again.
Having a strongly polynomial algorithm of worst-case complexity $O(N^3\log{N})$ (e.g., \cite{Goldberg1989}),
where $N$ is the number of locations of the transportation problem, it is definitely not enough.
In addition, the size of Transportation Problems that must be solved is so large, that we need new
numerical methods with strong mathematical foundations to solve such problems in practice.
The contribution of our paper tackles exactly this numerical challenge.

\paragraph{Related Works} In mathematical programming, the Hitchcock-Koopmans transportation problem is
considered a well--solved problem, since it can be solved with strongly polynomial algorithms
or with Network Simplex algorithms \cite{Ahuja,Goldberg1989}. Recent work focused on the structure
of the Transportation polytope \cite{Balinski1993,Borgwardt2017}, on variations of the problem with nonlinear
variables \cite{Tuy1996}, or on mixed production transportation problems with concave costs \cite{Holmberg1999}.
However, our paper follows a different (numerical) line of research, which aims at solving very
large instances of structured Hitchcock-Koopmans problems.

Our work is closely related to what, in the
computer vision and image retrieval community, is known as the {\it Earth Mover's Distance (EMD)} \cite{Rubner2000,Pele2009}.
In this context, the Kantorovich metric is used to compute the distance between images by precomputing so--called SIFT descriptors
(special type of histograms, see \cite{Lowe1999})
and then by computing distances between such descriptors. The advantages of using a more computationally
demanding metric instead of other simpler metrics is empirically demonstrated by the quality of the results,
and in addition is elegantly supported by the mathematical theory that lays the foundation of the Kantorovich metric.
Remarkable results were also obtained in the context of cell biology for comparing flow cytometry diagrams
\cite{Bernas2008,Orlova2016} and in radiation therapy to improve the comparison of histograms related to tumor shape \cite{Kazhdan2009}.

In statistics and probability, the Kantorovich distance is known as the Mallow's distance \cite{Bickel2001}.
It has been used for the assessment of goodness of fit between distributions \cite{Munk1998,Sommerfeld2018}
as well as  an alternative  to the usual $g$-divergences as  cost function in minimum distance point estimation problems \cite{Bassetti2006,Bassetti2006b}.
It was used to compare 2D histograms, but only considering the 1-norm as a cost structure
of the underlying Transportation Problem \cite{LingOkada2007}. Another interesting application
is the computation of the mean of a set of empirical probability measures based on
the Kantorovich metric \cite{Cuturi2014}.
In machine learning, the use of Kantorovich distances is increasingly spreading in different contexts.
Apart from the GAN networks \cite{Arjovsky2017}, it has been used in unsupervised domain adaptation \cite{Courty2017},
in semi-supervised learning \cite{Solomon2014}, and as a Loss Function for learning probability distributions
\cite{Frogner2015}.
For the theoretical foundations of the Monge-Kantorovich Transportation Problem, we refer the interested reader
to \cite{Santambrogio2015,Villani2008}, while for a survey on recent applications of
Optimal Transportation to \cite{Solomon2018}.

\paragraph{Outline}

In \cref{section:2} we review the Kantorovich distance (i.e., the Wasserstein distance of order 1) in the discrete setting and we show its connection with Linear Programming (\cref{ssec:lp}) and with uncapacitated minimum cost flow problems (\cref{sec:mincostflow} and \cref{Sec:flowvsKant}). In \cref{Sec:mainid}, we discuss how to reduce the time complexity of evaluating the Kantorovich distance by approximating the transportation problem with a flow problem on a specific reduced network, we define  the relative approximation error which is incurred upon by introducing a reduced flow network, and we provide a bound upon it. 

Our main contributions are presented in \cref{section:4}, where we exploit the reduced flow networks to efficiently compute
Kantorovich distances while using the 1-norm, the 2-norm, and the $\infty$-norm as ground distances. In addition, for the 2-norm, we
derive a quantitative estimate on the error between the optimal solution and an approximate solution on a reduced flow network (\cref{PropL2bound}).

In \cref{section:3}, we give all the details of the results
presented in \cref{sec:mincostflow}-\cref{Sec:mainid}.
Precisely, in \cref{ssec:duality}, we recall the classical Strong Duality Theorem, which we use in \cref{ssec:equiv} in order to establish a relation between the minimum of the cost flow problem and the infimum of Kantorovich's transportation problem. The equivalence of these two problems is well known (\cref{prop_equivalence}), but we choose to give a proof here, as we were not able to find one in any reference. In \cref{prop:lb} we state and prove the bound on the relative approximation error $\CE_G(b)$ introduced in \cref{eq:rae}. This is key in the proof of the error estimate \cref{eq:stima1} in \cref{PropL2bound}.

{\blue Finally, in \cref{section:5} we conclude by reporting our extensive numerical experiments.

}

\section{Background}
\label{section:2}

In this section, we review the basic notions and we fix the notation used in this paper. Moreover, in \cref{Sec:mainid} we state under which conditions Optimal Transportation and minimum cost flow problems yield the same optimal solutions, and we state a universal upper bound on the relative approximation error.

\newcommand{\nx}{{n_1}}
\newcommand{\ny}{{n_2}}
\subsection{Kantorovich distance in the discrete setting}
Let $X=\{x_1,\dots,x_\nx\}$ and $Y=\{y_1,\dots, y_\ny\} $ be two discrete spaces.
Given two probability vectors on $X$ and $Y$, denoted $\mu=(\mu(x_1),\dots,\mu(x_\nx))$
and $\nu=(\nu(y_1),\dots,\nu(x_\ny))$, and a cost $c : X \times Y \to \RE_+$,
the {\it Kantorovich-Rubinshtein  functional}  between $\mu$ and $\nu$ is defined as
\begin{equation}
\label{eq:kantorovich}
		\CW_c(\mu,\nu)= \inf_{ \pi \in \Pi(\mu,\nu)} \sum_{ (x,y) \in X\times Y} c(x,y) \pi(x,y)
\end{equation}
where $\Pi(\mu,\nu)$ is the set of all the probability measures on $X \times Y$ with marginals $\mu$ and $\nu$, i.e., of
the probability measures $\pi$ such that
\[
\sum_{y \in Y} \pi(x,y)=\mu(x) \quad  \text{and} \quad \sum_{x \in X} \pi(x,y)=\nu(y) 
\]
for every $(x,y)$ in $X \times Y$.
 Such probability measures are sometimes called transportation plans or couplings
for $\mu$ and $\nu$.
An important special case is when $X=Y$ and the cost function $c$ is a distance on $X$. In this case,
which is our main interest in this article,
$\CW_c$ is a distance on the simplex of probability vectors on $X$,  also known as \emph{Wasserstein distance} of order $1$.
It is worth mentioning that a Wasserstein distance of order $p$ can be defined, more in general, for arbitrary probability measures on a metric space $(X,d)$ by
\begin{equation}\label{wpgeneral}
 W_p(\mu,\nu):=\left(\inf_{ \pi \in \Pi(\mu,\nu)} \int_{ X\times X} d^p(x,y) \pi(dx dy)\right)^{\min(1/p,1)},
\end{equation}
where now $\Pi(\mu,\nu)$ is the set of all probability measures on the Borel sets of  $X \times X$ that have marginals $\mu$ and $\nu$, see, e.g., \cite{AGS}.
The infimum in \cref{wpgeneral} is attained and any
probability $\pi$ which realizes the minimum is called an {\it optimal transportation plan}.

\subsection{Linear Programming and Earth Mover's Distance}
\label{ssec:lp}
The Kantorovich-Rubinshtein  transportation problem in the discrete setting can be seen as a special case of
the following Linear Programming problem, where we assume now that $\mu$ and $\nu$ are
generic vectors of dimension $\nx$ and $\ny$, with positive components,
\begin{align}
\label{p1:funobj} (P) \quad \min \quad & \sum_{x \in X}\sum_{y \in Y} c(x,y) \pi(x,y) \\
\mbox{s.t.} \quad
\label{p1:supply} & \sum_{y \in Y} \pi(x,y) \leq \mu(x) & \forall x \in X \\
\label{p1:demand} & \sum_{x \in X} \pi(x,y)  \geq \nu(y) & \forall y \in Y \\
\label{p1:posvar} & \pi(x,y) \geq 0.
\end{align}
\noindent Note that the maximum flow quantity is equal to
\[
\sum_{x \in X}\sum_{y \in Y} \pi(x,y) = \min \left \{\sum_{x \in X} \mu(x), \sum_{y \in Y} \nu(y) \right \}.
\]
If $\sum_{x } \mu(x) = \sum_{y} \nu(y)$ we have the so-called  {\it balanced} transportation problem, otherwise the transportation problem is said to be {\it unbalanced}.
For balanced optimal transportation problems, constraints \cref{p1:supply} and \cref{p1:demand} must be satisfied with equality, and the problem
reduces to the Kantorovich transportation problem (up to normalization of the vectors $\mu$ and $\nu$).
%
%
Problem (P) is also related to the so-called  {\it Earth Mover's distance}.
In this case, $X,Y \subset \RE^d$ and  $x_i$ ($y_j$, respectively)  is  the center of the data cluster $i$  ($j$, respectively). Moreover,  $\mu(x_i)$ ($\nu(y_j)$, respectively)  is the number of
points in the cluster $i$ ($j$, respectively) and, finally,  $c(x_i,y_j)$  is some measure of dissimilarity between
$x_i$ and $y_j$.
 Once the optimal transportation  $\pi^*$ is determined, the Earth Mover's distance
between the {\it signatures} $(x_i,\mu(x_i))_i$  and $(y_j,\nu(y_j))_j$ is defined as
\[
EMD(\mu,\nu)= \frac{\sum_{x \in X}\sum_{y \in Y} c(x,y) \pi^*(x,y)}{\sum_{x \in X}\sum_{y \in Y}  \pi^*(x,y)}.
\]
The Earth Mover's distance (EMD) was first introduced
by Rubner et al. for color and texture images in \cite{Rubner1998} and \cite{Rubner2000}.

As noted in \cite{Rubner2000},  the EMD is a true metric on
distributions, and it is exactly the same as the Wasserstein distance, also known as Mallows distance.
For more details on this, see e.g. \cite{Bickel2001} and the references therein.
The cost function used in the EMD is typically a true distance, in general an $L_p$ distance.

\subsection{Uncapacitated minimum cost flow problem on a graph}
\label{sec:mincostflow}

As we shall see in \cref{Sec:flowvsKant},
a standard way to solve problem (P) is to recast it as an uncapacitated minimum
cost flow problem.
For this reason,  we briefly recall the definition of {\it minimum cost flow problem} on a directed graph.

Let $G=(V,E)$ be a directed network with no self-loops, where $V$ is the vertex set and $E$ the set of edges. 
Consider  a cost function  $c: E \to [0,+\infty)$  and a function  $b: V \to \RE$ such that $\sum_{v  \in V} b(v)=0$.
A flow (or $b$-flow) on $G$ is any  function $f: E \to [0,+\infty)$ such that
 \begin{equation}
 \label{bflow}
 \sum_{v:  (u,v) \in E}  f(u,v)- \sum_{v:  (v,u) \in E}  f(v,u) =b(u) \quad \forall \,\, u \in V.
 \end{equation}
  In the following, we denote by $\CF(G,b)$ the class of all the $b$-flows on $G$.
The  {\it (uncapacitated) minimum
cost flow problem} associated to $(G,c,b)$ consists in finding a $b$-flow $f$ which minimizes the cost
\[
\sum_{ (u,v) \in E} c(u,v) f(u,v).
\]
See, e.g., \cite{Ahuja} or Chapter 9 in \cite{KorteVygen}.
In what follows, we set
\begin{equation}
\label{minflowprim}
F_{G,c}(b):= \min_{f \in \CF(G,b) } \sum_{ (u,v) \in E} c(u,v) f(u,v).
\end{equation}
It is easy to see that if $G$ is a subgraph of $G'$, then
\begin{equation}
\label{simpleineq}
F_{G,c}(b) \geq F_{G',c}(b).
\end{equation}

\subsection{Wasserstein distance of order one as a minimum cost flow problem}
\label{Sec:flowvsKant}

It is easy to   re-write $\CW_c(\mu,\nu)$ as a minimum cost flow problem.
Consider the bipartite graph
\[
G_{X \to Y}=(X \cup Y,E_{X \to Y})
\]
 with
$E_{X \to Y}=\{ (x,y): x \in X, y \in Y \}$,
and define
\[
   b(u):=\left\{
   			\begin{array}{rl}
				\mu(u) &\text{if }u \in X,\\
				-\nu(u) &\text{if }u \in Y.
			\end{array}
			\right.
\]
It is plain to check that
 \begin{equation}\label{W=G}
 F_{G_{X \to Y},c}(b)=\CW_c(\mu,\nu).
\end{equation}
Incidentally, an uncapacitated minimum cost flow problem of this kind is known as {\it Hitchcock problem} \cite{Flood1953}.

In general, given a network $G=(V,E)$ with $n=|V|$ nodes and $k=|E|$ arcs, Orlin's algorithm \cite{Orlin} solves
the uncapacitated minimum  cost flow problem $F_{G,c}(b)$ for integer supplies/demands $b$ in $O(n \log (\min\{n, D\})S(n,k))$, where
$D := \max_{v \in V}  \{|b(v)|\}$  and  $S(n, k)$ is the complexity of finding a shortest path
 in $G$. Using Dijkstra's \cite{Dijkstra}  algorithm $S(n,k) = O(n  \log n + k)$.
This gives, for general supplies/demands,  the cost $O(n \log (n)(n \log n+ k))$.
Combining \cref{W=G} with the previous observations, one obtains  that $\CW_c(\mu,\nu)$ can be computed  exactly
with a  time complexity  $O((\nx+\ny)^3 \log(\nx+\ny))$.

When $X=Y$,  with $|X|=|Y|=:n$, and $c$ is a distance, one can show that the flow problem
associated with $\CW_c(\mu,\nu)$ can be formulated on an auxiliary network, which is smaller than the bipartite graph $G_{X \to X}$.
More precisely, set  $b=\mu-\nu$ and let
$K_n$ be the complete (directed) graph on $X$ (without self-loops).
If $c$ is a distance, then
\begin{equation}\label{W=F}
\CW_c(\mu,\nu)=  F_{K_n,c} (b).
\end{equation}
A detailed proof of this fact is given in  \cref{prop_equivalence} in \cref{section:3}.

Although $K_n$ is smaller than  $G_{X \to X}$,  the problem of determining
 $F_{K_n,c} (b)$ is  computationally demanding for relatively small $n$.
Using the previous considerations on the computational cost of a minimum flow problem, it is easy to see that
the time complexity of  computing $F_{K_n,c} (b)$ is  $O(n^3 \log(n))$
and hence it is of the same order of the complexity of computing  $F_{G_{X \to X},c}(b)$.

Even if  \cref{W=F} is not  satisfactory by itself, it is the starting point of our further developments.
In \cref{section:4}
 we shall exploit the cost structure (for some special costs) to reduce the number  of edges in the graph on which the flow problem is formulated, and to largely simplify the complexity of the problem when $X$ is the space of base points of a 2D histogram.

\subsection{Relaxation and Error bounds: main ideas}
\label{Sec:mainid}

Before moving to our main results concerning the computation of  the $\CW_c$
distance  between 2D-histograms, let us briefly outline our main ideas
 in a more general setting. All the details
are postponed to  \cref{section:3}.

Let  $G=(V,E)  \subset {K}_n$  be a directed graph
 and consider the corresponding flow problem as an approximation of
 the original flow problem on the complete graph ${K}_n$.
One has that
$F_{G,c} (b) \geq F_{{K}_n,c} (b)$,
which means that the relative error of approximation between the original minimum
and its relaxation $F_{G,c}$ satisfies
\begin{equation}
\label{eq:rae}
	\CE_G(b):=\frac{  F_{G,c} (b)-F_{{K}_n,c} (b)}{ F_{G,c} (b)} \in [0,1].
\end{equation}
If one is able to find a subgraph $G \subset {K}_n$ for which
$\CE_G(b) =0$,  one can reduce the  problem of computing  $F_{{K}_n,c} (b)$
to a simpler minimum flow  problem, i.e. to the evaluation of   $F_{G,c} (b)$. Even if this is not possible,
one can still use a subgraph $G$ to approximate $F_{{K}_n,c} (b)$ and
  control the relative error of approximation $\CE_G(b)$.

This last goal is obtained by deriving  a universal upper bound on $\CE_G(b)$.  To this end define
\begin{equation}
\label{def:cG}
	c_G(x,y):= \min_{\gamma \in \Gamma(x,y)}  \sum_{k=0}^{|\gamma|-1}  c(\gamma_k,\gamma_{k+1})
\end{equation}
where
$\Gamma(x,y)$ is the set of directed paths in $G$ from vertex $x$ to vertex $y$ and $|\gamma|$ is the length of the path $\gamma$, composed of edges $\gamma_k$,  and set
\[
	\Gamma_{G,c} := \max \left\{ 1- \frac{c(x,y)}{c_G(x,y)} :\ (x,y)\in X^2,\ x \not =y\right\}.
\]
In \cref{prop:lb} (\cref{section:3})  we prove that, if $c$ is a distance,
\begin{equation}\label{bound1}
 0 \leq \CE_G(b) \leq \Gamma_{G,c}
\end{equation}
for any vector $b$.
Hence, combining  \cref{W=F} and \cref{bound1} we get
\[
(1-\Gamma_{G,c})  F_{G,c} (\mu-\nu) \leq   \CW_c(\mu,\nu) \leq   F_{G,c}  (\mu-\nu)
\]
for any couple of measures $(\mu,\nu)$ and any distance $c$.
In the light of these considerations, the key-point is  to find  a small graph $G \subset K_n$ for which
$\Gamma_{G,c}$ is easy to compute and/or zero.


\section{Efficient Computation of \texorpdfstring{$\CW_c$}{Wdist} distance between 2D-Histograms\label{section:4}}
Histograms can be seen as discrete measures on a finite set of points in $\RE^d$. Here we are interested in the case in which  $d=2$. To represent a two-dimensional histogram with $N\times N$ equally spaced  bins, we take without loss of generality

\[
    X =\CL_{N}:=\{ \bi=(i_1,i_2): i_1=0,1,\dots,N-1, i_2=0,1,\dots, N-1\}.
\]

One can think of each point $(i_1,i_2)$ as the center of a bin.

In this case, $\# \CL_{N}=N^2$ and  the complete directed graph $K=K_{N^2}$ has $N^2(N^2-1)=O(N^4)$ edges.
 Note that, according to last section's notation, we are considering a set of $n=N^2$ points.
 To simplify notation, we write $K=K_{N^2}$.

Following the idea   sketched in \cref{Sec:mainid}, the goal
 is to approximate $\CW_c$ (or equivalently $F_{K,c}$), with $F_{G,c}$ for a suitable small graph $G \subset K$.

When the distance among the bins is measured with the 1-norm $L_1$ or the infinity-norm $L_\infty$,
we provide an optimal solution
using a   graph $G$  with $O(N^2)$ edges.
When the distance between bins is measured with the  $L_2$ -norm:
(i) we show that an optimal solution can be obtained using  a reduced network with around half the size of the complete bipartite graph,
(ii) given an error, we provide an approximate solution
using  a reduced network of size $O(N^2)$.

In order to describe the subgraphs $G$,
we use the following notation. Given $\bi=(i_1,i_2)\in \Z \times \Z$, $\bj=(j_1,j_2)\in \Z \times \Z,$ define
$\bi + \bj = (i_1+j_1,i_2+j_2)$.
If $G=(V,E)$ is a graph and $\bi,\bj\in V$, we denote the directed edge connecting $\bi$ to  $\bj$ by $(\bi,\bj)\in E$. Let $\N_0:=\N \setminus \{0\}$, $\Z_0:=\Z \setminus \{0\}$. For $L\in \N_0$, we define the following sets of vertices
\begin{align*}
	V_0&:=\{ (1,0),(0,1),(-1,0),(0,-1)\},\\
	V_L&:=V_0 \cup \left\{ (i_1,i_2) \in \Z_0\times \Z_0 :   |i_1|\leq L,\, |i_2| \leq L, \right. \\
		&\qquad \left. \text{and $|i_1|$ and $|i_2|$ are coprime} \right\}.
\end{align*}
The condition on the divisors of $|i_1|$ and $|i_2|$ implies that  any  point of coordinates  $\bi \in V_L$ is
  ``visible" from the origin (0,0), in the sense that there is no point with integer coordinates on the line segment between the origin and $\bi$.


\subsection{\texorpdfstring{$L_1$}{L1} ground distance}\label{sec.L1}
In this section we
 recover, as a special case of our approach,    the method  proposed in \cite{LingOkada2007} to efficiently compute
the $L_1$-EMD by
solving a flow problem on a graph with $O(N^2)$ edges.
Let us consider as ground distance the $L_1$ distance on $\CL_N$, i.e.
$c=d_1$, where
\[
d_1(\bi,\bj)=|i_1-j_1|+|i_2-j_2|.
\]
In this case, as a subgraph of $K$ we choose
\[
	G_0:=(\CL_N,E_0) \quad  \text{where}  \quad E_0:=\left\{(\bi,\bi+\bj):\bi \in \CL_N,\ \bj\in V_0,\ \bi+\bj \in \CL_N\right\},
\]
see \cref{Fig1}.
 %
\begin{figure}[ht]
	\begin{center}
	\includegraphics[width=2.75cm]{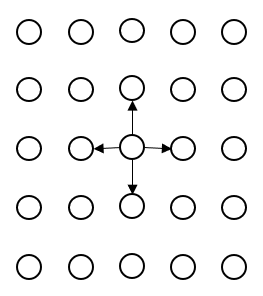}\hspace{1cm}
	\includegraphics[width=2.75cm]{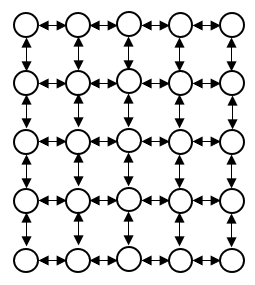}
	\caption{The edges connecting the middle node $\bi$ to the four nodes $\bi+\bj$, with $\bj\in V_0$ (left). Iterating for all $\bi \in \mathcal L_5$ we obtain $G_0$ (right). The resulting geometry is also known as {\it taxicab} or {\it Manhattan} geometry.}
	\label{Fig1}
	\end{center}
\end{figure}
    Note that the set of (directed)  edges of $G_0$ has cardinality
$2(N-1)N=O(N^2)$.

It is a simple exercise to see that in this case $\Gamma_{G_0,d_1}=0$ for every $N$. This means that one can compute
the distance $\CW_{d_1}(\mu,\nu)$ between two normalized histograms $\mu$ and $\nu$ without any error,
by solving  a  minimum cost flow problem on  the graph  $G_0$ which  has $N^2$ nodes and $O(N^2)$ edges.
We summarize the previous statements, recalling that $F_{G,c}(b)$ was defined in \cref{minflowprim}, in the next

%

\begin{proposition}
Under the previous assumptions,
 $\Gamma_{G_0,d_1}=0$ and hence
 \[
 	F_{G_0,d_1}(b)=F_{K,d_1}(b).
\]
In particular,
\[
\CW_{d_1}(\mu,\nu)=F_{G_0,d_1}(\mu-\nu)
\]
for every couple of probability measures $\mu$ and $\nu$ on $\CL_N$.
\end{proposition}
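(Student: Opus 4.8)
The plan is to establish directly that $\Gamma_{G_0,d_1}=0$ and then feed this into the general machinery of \cref{Sec:mainid}. Recall from \cref{def:cG} that $c_{G_0}(\bi,\bj)$ is the cost of a shortest directed path from $\bi$ to $\bj$ in $G_0$, where each edge is weighted by its $d_1$-length. Since every edge of $G_0$ joins two nodes differing by a single unit in exactly one coordinate, each edge has $d_1$-cost equal to $1$; consequently $c_{G_0}(\bi,\bj)$ coincides with the minimal number of unit axis-parallel steps needed to travel from $\bi$ to $\bj$ while staying inside $\CL_N$. The assertion $\Gamma_{G_0,d_1}=0$ is thus equivalent to the statement that this graph distance reproduces the taxicab distance, namely $c_{G_0}(\bi,\bj)=d_1(\bi,\bj)$ for every pair $\bi\neq\bj$.

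I would prove this equality by a two-sided bound. For the lower bound, note that along any path in $G_0$ each step changes a single coordinate by $\pm 1$, so the total number of steps is at least $|i_1-j_1|+|i_2-j_2|=d_1(\bi,\bj)$; hence $c_{G_0}(\bi,\bj)\geq d_1(\bi,\bj)$. For the upper bound, I would exhibit an explicit monotone lattice path: first move along the first coordinate from $i_1$ to $j_1$ in unit steps of the appropriate sign (using $(1,0)$ or $(-1,0)$, both in $V_0$), then along the second coordinate from $i_2$ to $j_2$. Since $\CL_N$ is a full axis-aligned box and both endpoints lie in it, every intermediate node of this path also lies in $\CL_N$, so the path is admissible in $G_0$; it uses exactly $d_1(\bi,\bj)$ edges, giving $c_{G_0}(\bi,\bj)\leq d_1(\bi,\bj)$. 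Combining the two bounds yields $1-d_1(\bi,\bj)/c_{G_0}(\bi,\bj)=0$ for every admissible pair, whence $\Gamma_{G_0,d_1}=0$.

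With $\Gamma_{G_0,d_1}=0$ in hand, the conclusion follows at once from the results already available. By \cref{bound1}, applied with the distance $c=d_1$, we have $0\leq\CE_{G_0}(b)\leq\Gamma_{G_0,d_1}=0$, so $\CE_{G_0}(b)=0$ for every admissible $b$; by the definition \cref{eq:rae} of the relative error this means precisely $F_{G_0,d_1}(b)=F_{K,d_1}(b)$. Finally, setting $b=\mu-\nu$ and invoking \cref{W=F} gives $\CW_{d_1}(\mu,\nu)=F_{K,d_1}(\mu-\nu)=F_{G_0,d_1}(\mu-\nu)$. The only step requiring any care is the admissibility of the monotone connecting path, i.e. verifying that it never leaves $\CL_N$; because $\CL_N$ is a rectangular box this is automatic, so there is no genuine obstacle — the argument reduces to the elementary fact that the metric of the $N\times N$ grid graph coincides with the $L_1$ metric on its vertices.
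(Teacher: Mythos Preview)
Your proof is correct and follows exactly the route the paper intends: the paper itself dismisses the key step as ``a simple exercise to see that in this case $\Gamma_{G_0,d_1}=0$'' and gives no further details, so you have merely filled in that exercise by showing $c_{G_0}=d_1$ via the obvious two-sided bound, and then invoked \cref{bound1} and \cref{W=F} precisely as the general framework of \cref{Sec:mainid} prescribes.
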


 \subsection{\texorpdfstring{$L_\infty$}{Linf} ground distance}
In this section, we specialize our approach to
deal with the $L_\infty$ ground distance.
If $c=d_\infty$, where $d_\infty(\bi,\bj)=\max(|i_1-j_1|,|i_2-j_2|)$,
we consider the graph (see \cref{Fig2})
\[
	G_1:=(\CL_N,E_1) \quad  \text{where} \quad E_1:=\left\{(\bi,\bi+\bj):\bi \in \CL_N,\ \bj\in V_1,\ \bi+\bj \in \CL_N\right\}.
\]

\begin{figure}[ht]
	\begin{center}
	\includegraphics[width=2.75cm]{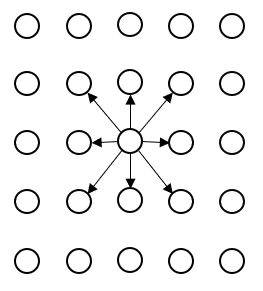}\hspace{1cm}
	\includegraphics[width=2.75cm]{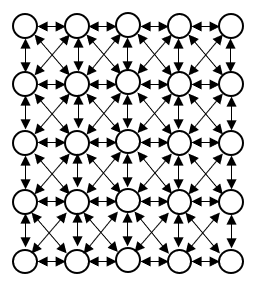}
	\caption{The edges connecting the middle node $\bi$ to the eight nodes $\bi+\bj$, with $\bj\in V_1$ (left), and the graph $G_1$ (right).
	With respect to \cref{Fig1}, four edges are added to each inner node.}
	\label{Fig2}
	\end{center}
\end{figure}

Also in this case it is easy to see that  $\Gamma_{G_1,d_\infty}=0$ and
the number of edges in $G_1$ is again $O(N^2)$, more precisely $2(N-1)(2N-1)$.

\begin{proposition}
One has $\Gamma_{G_1,d_\infty}=0$ and hence $F_{G_1,d_\infty}(b)=F_{K,d_\infty}(b)$. In particular,
\[
	\CW_{d_\infty}(\mu,\nu)=F_{G_1,d_\infty}(\mu-\nu)
\]
for every couple of probability measures $\mu$ and $\nu$ on $\CL_N$.
\end{proposition}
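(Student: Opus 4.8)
The plan is to show that the reduced graph $G_1$ preserves all $d_\infty$-distances, i.e. that $c_{G_1}(\bi,\bj)=d_\infty(\bi,\bj)$ for every pair of distinct grid points, and then to feed $\Gamma_{G_1,d_\infty}=0$ into the universal bound \cref{bound1}.

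First I would record the key structural observation that makes the argument work for $L_\infty$ (and not, say, for $L_2$): with $L=1$ the set $V_1$ consists of the eight ``king-move'' displacements $(\pm1,0),(0,\pm1),(\pm1,\pm1)$, and each of these has $d_\infty$-length exactly $1$, since the diagonal steps cost $\max(1,1)=1$ just like the axis steps. Consequently, along any directed path $\gamma$ in $G_1$ every summand in \cref{def:cG} equals $1$, so $c_{G_1}(\bi,\bj)$ is simply the minimum number of king moves needed to travel from $\bi$ to $\bj$ while staying inside $\CL_N$.

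The two matching inequalities then go as follows. For the lower bound $c_{G_1}(\bi,\bj)\ge d_\infty(\bi,\bj)$ I would invoke the triangle inequality for the metric $d_\infty$: telescoping along the edges of any path shows its total length is at least $d_\infty(\bi,\bj)$, which already forces every term $1-d_\infty(\bi,\bj)/c_{G_1}(\bi,\bj)$ to be nonnegative and hence $\Gamma_{G_1,d_\infty}\ge 0$. For the upper bound I would exhibit an explicit optimal path: writing $a=j_1-i_1$ and $b=j_2-i_2$ and assuming without loss of generality $|a|\ge|b|$ (the other case being symmetric), take $|b|$ diagonal steps in direction $(\mathrm{sign}\,a,\mathrm{sign}\,b)$ followed by $|a|-|b|$ horizontal steps in direction $(\mathrm{sign}\,a,0)$. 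This path uses exactly $|a|=\max(|a|,|b|)=d_\infty(\bi,\bj)$ edges, hence has $d_\infty$-cost $d_\infty(\bi,\bj)$. The one point requiring care — and the only genuine obstacle — is to check that the path never leaves $\CL_N$: because both coordinates vary monotonically from $(i_1,i_2)$ to $(j_1,j_2)$, every intermediate vertex has coordinates sandwiched between the corresponding endpoints, hence lying in $\{0,\dots,N-1\}$, so all its edges indeed belong to $E_1$.

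Combining the two bounds gives $c_{G_1}(\bi,\bj)=d_\infty(\bi,\bj)$ for all $\bi\neq\bj$, so every term $1-d_\infty(\bi,\bj)/c_{G_1}(\bi,\bj)$ vanishes and therefore $\Gamma_{G_1,d_\infty}=0$. Plugging this into \cref{bound1}, valid since $d_\infty$ is a distance, forces $\CE_{G_1}(b)=0$ for every admissible $b$, which by the definition \cref{eq:rae} means $F_{G_1,d_\infty}(b)=F_{K,d_\infty}(b)$ (the degenerate case $F_{G_1,d_\infty}(b)=0$ being trivial, as then $0\le F_{K,d_\infty}(b)\le F_{G_1,d_\infty}(b)=0$ by \cref{simpleineq}). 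Finally, taking $b=\mu-\nu$ and recalling the identity \cref{W=F}, again valid because $d_\infty$ is a distance, yields $\CW_{d_\infty}(\mu,\nu)=F_{K,d_\infty}(\mu-\nu)=F_{G_1,d_\infty}(\mu-\nu)$, which is the claim.
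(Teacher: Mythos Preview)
Your proof is correct and follows exactly the approach the paper intends: the paper simply asserts ``Also in this case it is easy to see that $\Gamma_{G_1,d_\infty}=0$'' without spelling out the argument, and your king-move path construction (diagonal steps followed by axis-aligned steps, with monotonicity ensuring the path stays in $\CL_N$) is precisely the natural verification of this claim. The subsequent deduction of $F_{G_1,d_\infty}(b)=F_{K,d_\infty}(b)$ and $\CW_{d_\infty}(\mu,\nu)=F_{G_1,d_\infty}(\mu-\nu)$ via \cref{bound1} and \cref{W=F} is also exactly what the paper's framework in \cref{Sec:mainid} prescribes.
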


 \subsection{\texorpdfstring{$L_2$}{} ground distance}
 \label{ssec:L2}
 Here we face  the challenging problem of efficiently computing
 the Wasserstein distance of order 1 between 2D-histograms using  the euclidean  $L_2$ norm as ground distance.
 Let us consider $c=d_2$, where
 \[
d_2(\bi,\bj)=\sqrt{ |i_1-j_1|^2+|i_2-j_2|^2}.
\]
In order to build a suitable subgraph of $K$, we further consider the vertices sets $V_L$ defined above.
For $1 \leq L \leq N-1$, we define the graph
 \begin{equation}
 \label{def:GL}
	G_L:=(\CL_N,E_L)  \quad \text{with} \quad  E_L:=\left\{(\bi,\bi+\bj):\bi \in \CL_N,\ \bj\in V_L,\ \bi+\bj \in \CL_N\right\},
\end{equation}
 (we say that $G_L$ is the graph induced by $V_L$). See \cref{Fig3}.
%

\begin{figure}[ht]
	\begin{center}
	\includegraphics[width=2.75cm]{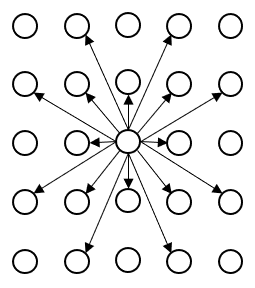}
	\caption{The set of edges  $\left\{(\bi,\bi+\bj)\right\}$, where $\bi$ is the middle node and $\bj\in V_2$. The graph $G_2$ is now too crowded to be clearly visualized.}
	\label{Fig3}
	\end{center}
\end{figure}

Our results concerning the $L_2$ ground distance are the following: in \cref{PropL2} we show that when $L=N-1$ (i.e., the induced subgraph is $G_{N-1}$)  the approximation error is zero. Again, this means that one can compute $\CW_{d_2}(\mu,\nu)$ by solving  a  minimum cost flow problem on  a graph  which is strictly contained in the complete graph $K$. Unfortunately, in this case, the number of  required edges  is of order $N^4$ (\cref{PropL2}). To conclude, in \cref{PropL2bound}, we provide a sharp estimate on the error due to the approximation of $K$ with $G_L$, for $L<N-1$.

\begin{proposition}
\label{PropL2}
With the previous notation, if  $c=d_2$, then
$c_{G_{N-1}}=d_2$, hence $\Gamma_{G_{N-1},d_2}=0$ and
\[
\CW_{d_2}(\mu,\nu)=F_{G_{N-1},d_2} (\mu-\nu)
\]
for every couple of probability measures $\mu,\nu$ on $\CL_N$.
Moreover, if $E_{N-1}$ denotes the set of edges of $G_{N-1}$, then
\begin{equation*}
	\lim_{N \to + \infty}   \frac{\# E_{N-1}}{N^4}  = \frac{6}{\pi^2} .
\end{equation*}
\end{proposition}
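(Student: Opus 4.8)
\textbf{First claim: the metric is recovered, $c_{G_{N-1}}=d_2$.} I would establish the two inequalities separately. The lower bound $c_{G_{N-1}}(\bi,\bj)\ge d_2(\bi,\bj)$ is immediate from the triangle inequality: along any directed path $\gamma$ from $\bi$ to $\bj$ in $G_{N-1}$ the edge costs are exactly the Euclidean lengths $d_2(\gamma_k,\gamma_{k+1})$, so the total cost of $\gamma$ is at least $d_2(\bi,\bj)$, and minimizing over paths in \cref{def:cG} gives the bound. For the reverse inequality I would exhibit a single path realizing $d_2(\bi,\bj)$. Set $\bk:=\bj-\bi$; since $\bi,\bj\in\CL_N$ we have $|k_1|,|k_2|\le N-1$. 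Let $m:=\gcd(|k_1|,|k_2|)$ and $\bv:=\bk/m$, so that $\bv$ is a primitive integer vector with $|v_1|,|v_2|\le N-1$; by construction $\bv\in V_{N-1}$ (pure-axis displacements land in $V_0$, and otherwise the coordinates of $\bv$ are nonzero with coprime moduli, which is precisely the defining condition of $V_{N-1}$). The $m+1$ collinear points $\bi+t\bv$, $t=0,\dots,m$, are integer points on the segment $[\bi,\bj]$, hence lie in $\CL_N$, and each consecutive pair is a directed edge of $G_{N-1}$. Since these $m$ edges are collinear and partition $[\bi,\bj]$, their $d_2$-lengths sum to $d_2(\bi,\bj)$, giving $c_{G_{N-1}}(\bi,\bj)\le d_2(\bi,\bj)$. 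Combining the bounds yields $c_{G_{N-1}}=d_2$, whence $1-d_2(\bi,\bj)/c_{G_{N-1}}(\bi,\bj)=0$ for all $\bi\ne\bj$ and thus $\Gamma_{G_{N-1},d_2}=0$. The universal bound \cref{bound1} then forces $\CE_{G_{N-1}}(b)=0$ for every $b$, i.e. $F_{G_{N-1},d_2}(b)=F_{K,d_2}(b)$, and \cref{W=F} upgrades this to $\CW_{d_2}(\mu,\nu)=F_{G_{N-1},d_2}(\mu-\nu)$.

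\textbf{Edge-count asymptotic.} For a fixed $\bj\in V_{N-1}$ the admissible base points, namely those $\bi$ with $\bi,\bi+\bj\in\CL_N$, number exactly $(N-|j_1|)(N-|j_2|)$, so
\[
\#E_{N-1}=\sum_{\bj\in V_{N-1}}(N-|j_1|)(N-|j_2|).
\]
The four axis vectors in $V_0$ contribute $4(N-1)N=O(N^2)$, negligible against $N^4$. Exploiting the four-fold sign symmetry of the remaining off-axis primitive vectors, and removing the coprimality constraint by M\"obius inversion (writing $M$ for the M\"obius function), the dominant part equals
\[
4\sum_{d\ge1}M(d)\Big(\sum_{k=1}^{\lfloor(N-1)/d\rfloor}(N-dk)\Big)^{2}.
\]
For each fixed $d$ one has $\sum_{k=1}^{\lfloor(N-1)/d\rfloor}(N-dk)=\tfrac{N^2}{2d}+O(N)$, so the inner square is $\tfrac{N^4}{4d^2}+O(N^3)$; dividing by $N^4$ and passing to the limit term by term gives $4\cdot\tfrac14\sum_{d\ge1}M(d)/d^2=1/\zeta(2)=6/\pi^2$, as claimed.

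\textbf{Main obstacle.} The geometric first claim is complete once the primitive decomposition is in place; the only delicate point is justifying the term-by-term passage to the limit in the last display, since the $O(N)$ error in each partial sum depends on $d$ while one sums over all $d$. I would handle this by dominated convergence for series: the uniform estimate $\sum_{k=1}^{\lfloor(N-1)/d\rfloor}(N-dk)\le N\lfloor(N-1)/d\rfloor\le N^2/d$ gives, after normalization, the $d$-independent bound $N^{-4}M(d)\big(\sum_k(N-dk)\big)^2\le d^{-2}$, and the summands vanish for $d\ge N$; since $\sum_d d^{-2}<\infty$ and each normalized summand converges pointwise to $M(d)/(4d^2)$, the interchange of limit and summation is legitimate and the value $\sum_{d\ge1}M(d)/d^2=1/\zeta(2)$ follows. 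I expect this uniform control of the error, rather than the leading-order computation itself, to be the crux.
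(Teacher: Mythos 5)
Your proof is correct. For the first claim you follow essentially the paper's route: the paper isolates the primitive decomposition as \cref{lemma:mi} ($\bj-\bi=m\bv$ with $\bv\in V_{N-1}$) and then builds the same collinear path $\bi,\bi+\bv,\dots,\bi+m\bv$; your version is in fact slightly more careful, since you handle the axis case ($k_1=0$ or $k_2=0$ landing in $V_0$) and the sign issue explicitly, which the paper only glosses with a parenthetical remark. For the edge-count asymptotic, however, you take a genuinely different decomposition. The paper counts edges by grouping over \emph{source nodes}: each node $\bi$ contributes the number of primitive lattice points in a translated square $D_{ij,N}$, and the key technical input is \cref{asymtden}, a uniform estimate $\sup_{D\in\CD}\bigl|B_D(x)-6x/\pi^2\bigr|\leq C\sqrt{x}(1+\log\sqrt{x})$ over all unit squares containing the origin, itself proved by M\"obius inversion. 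You instead group over \emph{directions}: the exact identity $\#E_{N-1}=\sum_{\bj\in V_{N-1}}(N-|j_1|)(N-|j_2|)$ turns the count into a single weighted coprime-pair sum, which factors after M\"obius inversion into $4\sum_d M(d)\bigl(\sum_k(N-dk)\bigr)^2$, and your dominated-convergence argument (with the $d$-independent bound $d^{-2}$ and vanishing of terms for $d\geq N$) correctly legitimizes the term-by-term limit --- you rightly identify this interchange as the one delicate point, and your treatment of it is airtight. The trade-off: the paper's lemma delivers a quantitative relative error of order $(1+\log N)/N$ on $\#E_{N-1}/N^4$ (more than the proposition asserts), and handles the boundary truncation via the translated squares $D_{ij,N}^*$; your argument yields only the limit, but is shorter, fully self-contained (no geometric blow-up lemma, no sup over squares), and incorporates the boundary effect exactly from the start through the weights $(N-|j_1|)(N-|j_2|)$. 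Both are valid proofs of the stated result.
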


The next result, in combination with \cref{bound1}, shows that  one can approximate the true value
$\CW_{d_2}(\mu,\nu)$ by computing
$F_{G_{L},d_2} (\mu-\nu)$
  with $L < N-1$ and
obtain an explicit  bound on the relative error.

\begin{theorem}
\label{PropL2bound}
Let $c=d_2$,  then for every $1 \leq L < N-1$
\begin{equation}
\label{eq:stima1}
	1-\frac{\sqrt{1+4L^2}}{L+\sqrt{1+L^2}\ }   \leq \Gamma_{G_{L},d_2} \leq  c(L)\left(1-\frac{L}{\sqrt{1+L^2}}\right),
\end{equation}
where $0.25<c(L)<0.26$ and $c(L)$ is monotone decreasing. Asymptotically,
\begin{equation}
\label{eq:asympt}
		 \Gamma_{G_{L},d_2}  =  \frac{1}{8L^2} -\frac{11}{128L^4}+O\left( \frac{1}{L^6}\right) \quad \text{ for } L\to +\infty.
\end{equation}
\end{theorem}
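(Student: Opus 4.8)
The plan is to reduce the whole statement to the analysis of a single piecewise-linear gauge on the lattice. Since every edge of $G_L$ is a translate of a move $\bj\in V_L$ of cost $\|\bj\|_2$, the graph distance depends only on the displacement $\bw=\by-\bx$, namely
\[
 c_{G_L}(\bx,\by)=\min\Big\{\sum_{\bj\in V_L} n_{\bj}\,\|\bj\|_2 : n_{\bj}\in\Z_{\ge 0},\ \sum_{\bj\in V_L} n_{\bj}\,\bj=\bw\Big\}=:c_{G_L}(\bw).
\]
A first observation removes boundary effects: within any octant the two moves bracketing a direction have coordinates of one sign, so an optimal free path is monotone and stays in the axis-aligned box spanned by its endpoints, hence inside $\CL_N$; thus the constrained and free costs agree and $\Gamma_{G_L,d_2}=\max_{\bw\ne 0}\big(1-\|\bw\|_2/c_{G_L}(\bw)\big)$, with $\bw$ ranging over admissible displacements. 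First I would introduce the relaxation $c^{\mathrm{LP}}(\bw)$ obtained by allowing $n_{\bj}\in\RE_{\ge 0}$; a standard computation identifies its unit ball with the convex polygon $P_L:=\mathrm{conv}\{\bj/\|\bj\|_2:\bj\in V_L\}$ inscribed in the unit circle, so that $c^{\mathrm{LP}}(\bw)=\|\bw\|_2/\rho(\bw/\|\bw\|_2)$, where $\rho(\theta)$ is the radial distance to $\partial P_L$, and the defect in direction $\theta$ is exactly $1-\rho(\theta)$.

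The crux, and the step I expect to be the \emph{main obstacle}, is to prove that the integer problem and its relaxation coincide, $c_{G_L}\equiv c^{\mathrm{LP}}$ on the whole lattice. Here I would invoke the arithmetic of $V_L$: in the first octant its directions are precisely the primitive vectors $(q,p)$ with $0\le p\le q\le L$ and $\gcd(p,q)=1$, i.e.\ the Farey fractions $p/q$ of order $L$ (the ``visibility'' condition is exactly primitivity). Two angularly consecutive such vectors $(q,p),(q',p')$ are Farey neighbours, so $|q p'-q' p|=1$; they form a unimodular $\Z$-basis and their cone contains no other direction of $V_L$. Consequently each edge of $P_L$ spans a unimodular cone over which $\partial P_L$ is that single chord, and every lattice point of the cone is a non-negative \emph{integer} combination of the two generators. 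This yields $c_{G_L}=c^{\mathrm{LP}}$ and shows an optimal path never uses more than the two moves bracketing its direction.

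With this identification the two bounds become plane geometry. The defect $1-\rho(\theta)$ is maximal at the foot of the perpendicular from the origin to the farthest chord, where it equals $1-h_e$ with $h_e=\cos(\text{half the angle subtended by the edge }e)$; hence the worst edge is the one of largest angular width. The largest Farey gap of order $L$ is $1/L$ in slope, at the ends of each octant, and the comparison $\arctan\tfrac1L>\arctan\tfrac1{2L-1}$ singles out the extreme edge from $(1,0)$ to $(L,1)$, of angular width $\arctan\tfrac1L$; by the $D_4$-symmetry of $V_L$ it is the global worst edge. Writing $2\psi=\arctan\tfrac1L$ and using $1-\cos 2\psi=1-L/\sqrt{1+L^2}$, the maximal defect $1-\cos\psi$ rewrites as
\[
\Gamma_{G_L,d_2}\ \le\ c(L)\Big(1-\tfrac{L}{\sqrt{1+L^2}}\Big),\qquad c(L)=\frac{1}{4\cos^2(\psi/2)},
\]
and monotonicity of $\psi\mapsto(4\cos^2(\psi/2))^{-1}$ on $\psi\in(0,\pi/8]$ gives $0.25<c(L)<(4\cos^2(\pi/16))^{-1}<0.26$. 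For the lower bound I would evaluate the now-exact defect at the admissible lattice displacement $(2L,1)=L\,(1,0)+(L,1)$ lying in this worst cone, where $c_{G_L}((2L,1))=L+\sqrt{1+L^2}$, giving $\Gamma_{G_L,d_2}\ge 1-\sqrt{1+4L^2}/(L+\sqrt{1+L^2})$ (this witness is admissible once $N\ge 2L+2$; the narrow regime $L\ge N/2$ would be handled by a companion witness in the same cone).

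Finally, for \cref{eq:asympt} I would Taylor-expand both explicit bounds in $\eps=1/L$. From $1-L/\sqrt{1+L^2}=\tfrac12\eps^2-\tfrac38\eps^4+O(\eps^6)$ and $c(L)=\tfrac14+\tfrac1{64}\eps^2+O(\eps^4)$ the upper bound equals $\tfrac18\eps^2-\tfrac{11}{128}\eps^4+O(\eps^6)$, and expanding $1-\sqrt{1+4L^2}/(L+\sqrt{1+L^2})$ gives the \emph{same} $\tfrac18\eps^2-\tfrac{11}{128}\eps^4+O(\eps^6)$; squeezing the two bounds proves $\Gamma_{G_L,d_2}=\tfrac1{8L^2}-\tfrac{11}{128L^4}+O(L^{-6})$. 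The only genuinely delicate point is the unimodularity/Farey argument of the second paragraph; everything else is convex geometry of $P_L$ and routine one-variable asymptotics.
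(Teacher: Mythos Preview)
Your proposal is correct and follows essentially the same route as the paper: the Farey-neighbour/unimodularity step is precisely the paper's Lemma~\ref{lemma:integer_lc}, the identification of the extremal cone $(1,0)$--$(L,1)$ via the largest angular gap corresponds to Lemma~\ref{lemma:maxangle}, the half-angle bound and the explicit form of $c(L)$ coincide, and the lower-bound witness $(2L,1)$ together with the matching Taylor expansions are exactly what the paper uses. Your framing via the LP relaxation and the inscribed polygon $P_L$ is a clean conceptual repackaging, but the substance of the argument is identical.
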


\subsection{Proofs of \texorpdfstring{\cref{ssec:L2}}{L2}}
Define the set
\[
	\tilde{V}_L:=\left\{ (i_1,i_2)\in V_L : i_1\geq i_2\geq 0\right\}.
\]
By restricting to $\tilde{V}_L$, we consider only directions characterized by  angles $\alpha$ ranging between  $0$ and $\pi/4$; all other directions may be obtained from $\tilde{V}_L$ by rotations of $k\pi/4$, for $\ k=1,\ldots, 7$. 
To every element $\bi \in \tilde{V}_L$ we associate the slope
\[
	t_\bi : =\frac{i_2}{i_1}.
\]
The collection of these slopes, for $L\geq 1$, forms the so-called Farey sequence $\mathfrak{F}_L$, see
e.g. Section 4.5 in \cite{Knuth} or Chapter 6 in \cite{Niven}.
For example, for $L=1,2,3$, we have
\[
	\mathfrak{F}_1=\Big \{\frac{0}{1}, \frac{1}{1} \Big\}, \quad \mathfrak{F}_2=\Big \{ \frac{0}{1},\frac{1}{2}, \frac{1}{1}\Big \}, \quad
	\mathfrak{F}_3=\Big \{\frac{0}{1}, \frac{1}{3},\frac{1}{2}, \frac{2}{3},\frac{1}{1} \Big\}.
\]
Since the elements in $V_L$ are pairwise coprime, the mapping
\begin{equation*}
	t:\tilde{V}_L \to \mathfrak{F}_L,\quad\quad	 \bi \mapsto \frac{i_2}{i_1}
\end{equation*}
is one-to-one, i.e., there is a bijection between $\tilde{V}_L$ and $\mathfrak{F}_L$. We will therefore use some known properties of the Farey sequence to prove an estimate on the graph induced by $V_L$. Before proving \cref{PropL2}, we need the following remark.
 \begin{lemma}
\label{lemma:mi}
Each point $\bj$  in the square lattice $\CL_L$
can be written as
\[
	\bj= m \bi
\]
where $m$ is an integer and $\bi$ is in $V_{L-1}$.
\end{lemma}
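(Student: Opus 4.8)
The plan is to prove the lemma by simply extracting the greatest common divisor of the two coordinates. Write $\bj=(j_1,j_2)\in\CL_L$, so that $0\le j_1\le L-1$ and $0\le j_2\le L-1$. I would first dispose of the degenerate case $\bj=(0,0)$: here one takes $m=0$ and picks any $\bi\in V_0\subseteq V_{L-1}$, which is legitimate because $V_0$ is nonempty for every $L\geq 1$.

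For $\bj\neq(0,0)$, I would set $m:=\gcd(j_1,j_2)$ (with the convention $\gcd(a,0)=a$ for $a>0$) and define $\bi:=(j_1/m,\,j_2/m)$, so that $\bj=m\bi$ with $m$ a positive integer. The remaining task is to verify $\bi\in V_{L-1}$. By the defining property of the gcd, the entries $j_1/m$ and $j_2/m$ are coprime, and they are nonnegative since $\bj$ has nonnegative entries; moreover $|j_1/m|\le j_1\le L-1$ and $|j_2/m|\le j_2\le L-1$ because $m\ge 1$, so dividing by $m$ cannot push either coordinate outside the required box.

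To finish I would split according to whether $\bi$ has a vanishing coordinate. If both coordinates are nonzero, then $\bi\in\Z_0\times\Z_0$ has coprime absolute values bounded by $L-1$, hence it lies in the second set defining $V_{L-1}$. If instead one coordinate vanishes, say $j_2=0$ with $j_1>0$, then $m=\gcd(j_1,0)=j_1$ forces $\bi=(1,0)$, and symmetrically $j_1=0$ gives $\bi=(0,1)$; in either case $\bi\in V_0\subseteq V_{L-1}$. There is no genuine obstacle in this argument: the only points demanding attention are the degenerate cases (the origin and the points on the axes) together with the elementary check that division by $m\ge 1$ preserves the bound $L-1$ built into the definition of $V_{L-1}$.
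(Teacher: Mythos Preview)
Your proof is correct and follows essentially the same approach as the paper's: extract the greatest common divisor of the coordinates and observe that the quotient lands in $V_{L-1}$. Your version is more explicit about the degenerate cases (the origin and points on the axes), which the paper's terse argument glosses over, but the underlying idea is identical.
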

\begin{proof}
Let $\bj=(j_1,j_2)\in \CL_L$. By definition of $\CL_L$, $0\leq j_1,j_2\leq L-1$. If $j_1$ and $j_2$ are coprime, then $\bj \in V_{L-1}$ (i.e., we can choose $m=1$, $\bi=\bj$). If they are not coprime, then there exists $m\in \N$ and $\bi \in V_{L-1}$ such that $(j_1,j_2) = (mi_1,mi_2)$.
\end{proof}

We will also use known
results on  the asymptotic density of coprime numbers.
Given a compact
convex set D in $\RE^2$ containing the origin,
the number of
primitive lattice points in the ``blow up" set $\sqrt{x} D$,
i.e. the number of coprime numbers  in  $\sqrt{x} D$,
diverges as $x$ goes to $+\infty$ as
$6 x a(D) /{\pi^2}$, where $a(D)$ is the area of $D$.
Under suitable  regularity assumptions on $D$,  one can derive precise estimates on the remainder, see e.g. \cite{Huxley96}.

\begin{lemma}\label{asymtden}
Let $\CD$ be the set of squares  in $\RE^2$ containing $(0,0)$ with side of length 1.
For any $D \in \CD$ and  $x>0$ set  $B_D(x)=\# \{ z=(z_1,z_2) \in Z_0^2 :
 z/\sqrt{x} \in D:  gcd(z_1,z_2)=1 \}$.
Then, there is a constant $C$ such that for every $x \geq1$
\[
\sup_{D \in \CD} \Big | B_D(x)-\frac{ 6 x }{\pi^2} \Big | \leq C \sqrt{x}[1+ \log(\sqrt{x})].
\]
\end{lemma}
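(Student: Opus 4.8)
The plan is to reduce the count of primitive lattice points to the elementary Gauss estimate for the number of \emph{all} lattice points in a dilated square, and then recover the primitive count by M\"obius inversion. Throughout write $y=\sqrt{x}$ and, for $t>0$, let $N_D(t):=\#\{z\in\Z^2\setminus\{0\}:\ z\in tD\}$ be the number of nonzero lattice points in the dilate $tD:=\{tv:v\in D\}$, and let $P_D(t):=\#\{z\in\Z^2\setminus\{0\}:\ z\in tD,\ \gcd(z_1,z_2)=1\}$ be the corresponding number of primitive points. First I would observe that $B_D(x)$ differs from $P_D(y)$ only by primitive points lying on the coordinate axes, of which there are at most the four points $(\pm1,0),(0,\pm1)$; hence $|B_D(x)-P_D(y)|\le 4$, and it suffices to estimate $P_D(y)$.

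The first main ingredient is the elementary, rotation- and translation-invariant Gauss bound: for every convex $K\subset\RE^2$ one has $|\#(\Z^2\cap K)-a(K)|\le c_0(p(K)+1)$, where $a(K)$ and $p(K)$ are the area and perimeter of $K$ and $c_0$ is an absolute constant (proved by attaching a unit cell to each lattice point and confining the discrepancy to a boundary strip; no refined remainder as in \cite{Huxley96} is needed here). Since every $D\in\CD$ has $a(D)=1$ and $p(D)=4$, applying this to $K=tD$ gives, uniformly in $D\in\CD$ and $t>0$,
\[
	N_D(t)=t^2+R_D(t),\qquad |R_D(t)|\le c_0(4t+1),
\]
the origin (always in $tD$) contributing only an additive $\pm1$ absorbed into $R_D$.

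Next I would set up the M\"obius inversion. Classifying each nonzero lattice point $z$ by $d=\gcd(z_1,z_2)$ and writing $z=dw$ with $w$ primitive gives $N_D(t)=\sum_{d\ge1}P_D(t/d)$, whence $P_D(y)=\sum_{d\ge1}\mu(d)\,N_D(y/d)$. Because $D$ contains the origin and has side $1$, it lies in the disc of radius $\sqrt2$ about the origin, so $(y/d)D$ contains no nonzero lattice point once $\sqrt2\,y/d<1$; thus the sum is finite, running over $1\le d\le\sqrt2\,y$. Substituting the estimate for $N_D$ splits $P_D(y)$ into a main term and an error term. For the main term, $y^2\sum_{d\le\sqrt2 y}\mu(d)/d^2=y^2\big(1/\zeta(2)+O(1/y)\big)=\tfrac{6x}{\pi^2}+O(\sqrt x)$, since $\sum_{d}\mu(d)/d^2=1/\zeta(2)=6/\pi^2$ and the tail is $O(1/y)$. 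For the error term, $\sum_{d\le\sqrt2 y}|R_D(y/d)|\le c_0\sum_{d\le\sqrt2 y}(4y/d+1)=O\big(y\log y+y\big)=O(\sqrt x\log\sqrt x)$ by the harmonic bound $\sum_{d\le M}1/d=\log M+O(1)$.

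Collecting the two contributions yields $P_D(y)=\tfrac{6x}{\pi^2}+O\big(\sqrt x(1+\log\sqrt x)\big)$, where for $x\ge1$ the bracket is $\ge1$ and absorbs the $O(\sqrt x)$ term; combining with $|B_D(x)-P_D(y)|\le4$ gives the claimed estimate. The point to emphasize is uniformity over $\CD$: every constant above — the Gauss constant $c_0$, the area and perimeter of $D$, the tail and harmonic-sum bounds — is independent of the particular square $D$, so the implied $C$ can be chosen uniformly, which is exactly the supremum statement. The only genuinely delicate step is securing the Gauss bound with a constant that does not degrade under rotation of $D$; since that bound depends on $K$ only through its area and perimeter, both invariant on $\CD$, this causes no trouble.
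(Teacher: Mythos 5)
Your proof is correct and follows essentially the same route as the paper's: Möbius inversion reducing the primitive count to the count of all lattice points in the dilated square, a Gauss-type boundary estimate uniform over $\CD$, truncation of the Möbius sum at $d=O(\sqrt{x})$, the identity $\sum_d \mu(d)/d^2 = 6/\pi^2$ with an $O(\sqrt x)$ tail, and a harmonic-sum bound producing the $\log$ factor. The only differences are cosmetic — you parametrize by the linear dilation $t$ instead of the gauge $Q(z)\le x$, and you patch the discrepancy between $Z_0^2$ and $\Z^2\setminus\{0\}$ with the four axis points $(\pm1,0),(0,\pm1)$, whereas the paper runs the inversion directly over $Z_0^2$.
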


\begin{proof}
One can write
$B_D(x)=\# \{ z \in Z_0^2 : Q(z) \leq x, gcd(z_1,z_2)=1 \} $
where
\[
Q(z)=\Big ( \inf\{t>0:  z/t \in D  \}  \Big )^2.
\]
Set also  $A_D(x)=\# \{ z \in Z_0^2 : Q(z) \leq x \} $. Combining these definitions with
 well-known properties of the M\"obius function $\mu$,
 one can prove that
\[
B_D(x) =\sum_{m \in \N }   A_D\Big ( \frac{x}{m^2} \Big )  \mu(m).
\]
See e.g. \cite{Huxley96} or Lemma 4 in \cite{Zagier}.
Since $D$ is a square of length $1$ containing the origin, then
  $ A_D\Big ( \frac{x}{m^2} \Big ) =0$ whenever $m>\sqrt{x}$.
Moreover,
\[
\Big | A_D(x)- x  \Big | \leq 4 \sqrt{x}.
\]
Combining these facts,
\[
B_D(x) = x  \sum_{m \leq \sqrt{x} }    \frac{\mu(m)}{m^2} +
R(x)
\]
where, recalling also that $|\mu(m)|\leq 1$,
\[
|R(x) | \leq  \sum_{m \leq \sqrt{x} } \left  | \Big (  A_D \Big (\frac{x}{m^2} \Big )-  \frac{x}{m^2} \Big ) \mu(m)  \right | \leq  4  \sqrt{x}   \sum_{m \leq \sqrt{x} }  \frac{1}{m}.
\]
To conclude recall that  $\sum_{m \geq 1 }    \frac{\mu(m)}{m^2}= 6/\pi^2$ to write, for $x \geq 1$,
\[
 \Big | B_D(x)-\frac{ 6 x }{\pi^2} \Big | \leq 4  \sqrt{x}   \sum_{m \leq \sqrt{x} }  \frac{1}{m}
 +x  \sum_{m > \sqrt{x} }    \frac{1}{m^2}
 \leq C [ \sqrt{x}  (1+ \log \sqrt{x}  )   ].
\]
\end{proof}

\begin{proof} (Proof of \cref{PropL2})
Let us start
by proving that
\[
	\CW_{d_2}(\mu,\nu)=F_{G_{N-1},d_2} (\mu-\nu).
\]
By definition \cref{def:GL},
\[
	G_{N-1}=(\CL_{N},E_{N-1}),\qquad E_{N-1}:=\left\{(\bi,\bi+\bj):\bi \in \CL_N,\ \bj\in V_{N-1},\ \bi+\bj \in \CL_N\right\}.
\]
Given $\bi,\bj \in \CL_N$, an admissible path $\gamma$ connecting $\bi$ to $\bj$ in $G_{N-1}$ is a collection of edges $\gamma_m=(\bx_m,\by_m)\in E_{N-1}$, $m=1,\ldots,M$ such that
$\bx_1=\bi$, $\by_m=\bx_{m+1}$, and $\by_M=\bj$. The length of a path $\gamma=\{\gamma_1,\ldots,\gamma_M\}$ is the sum of the lengths of its edges, that is
\[
	|\gamma|=\sum_{\gamma_m \in \gamma}|\gamma_m|=\sum_{m=1}^M d_2(\bx_m,\by_m).
\]
We denote by $\Gamma(\bi,\bj)$ the set of all admissible paths connecting $\bi$ to $\bj$. We want to prove that
\[
	d_2(\bi,\bj)=c_{G_{N-1}}(\bi,\bj)\quad \forall\, \bi,\bj \in \CL_{N},
\]
that is, according to definition \cref{def:cG}, that for every $\bi,\bj \in \CL_{N}$ there exists $\gamma \in \Gamma(\bi,\bj)$ such that $d_2(\bi,\bj)=|\gamma|$. Let $\bi,\bj \in \CL_N$, since
\[
	\max\{|j_1-i_1|,|j_2-i_2|\}\leq N-1,
\]
then the vertex $(|j_1-i_1|,|j_2-i_2|)$ belongs to $\CL_N$.
By Lemma \cref{lemma:mi} there exists $\mathbf v \in V_{N-1}$ such that $\bj-\bi=m\mathbf v$ (the coordinates in $V_L$ are not restricted to positive integers). Choosing as admissible path
\[
	\gamma = \{(\bi,\bi+\mathbf v),(\bi+\mathbf v,\bi+2\mathbf v),\ldots,(\bi+(m-1)\mathbf v,\bi+m\mathbf v)\},
\]
we compute
\begin{align*}
	c_{G_{N-1}}(\bi,\bj)& \leq |\gamma| = \sum_{k=0}^{m-1}d_2(\bi+k\bv,\bi+(k+1)\bv) \\
		&= md_2(\mathbf 0,\bv)
		= d_2(\mathbf 0,m\bv)
		=d_2(\bi,\bj).
\end{align*}
Since $c_{G_{N-1}}(\bi,\bj) \geq d_2(\bi,\bj)$ for all $\bi,\bj\in \CL_N$, we proved $c_{G_{N-1}}= d_2$.
%
%
%
Now we prove the second part of the statement.
Let $D_{ij,N}=\{ (y_1,y_2)  \in \RE^2:  -i \leq y_1 \leq N-1-i ,  -j \leq  y_2 \leq  N-1-j \}$.
The number of coprime integers in $D_{ij,N}$  is $B_{D_{ij,N}}(1)$,
moreover
 $B_{D_{ij,N}}(1)=B_{D_{ij,N}^*}((N-1)^2)$, where
 $D_{ij,N}^*=\{ (y_1,y_2) \in \RE^2:  -i/(N-1) \leq y_1 \leq 1-i/(N-1) ,  -j/(N-1) \leq   y_2 \leq 1-j/(N-1) \}$
 belongs to $\CD$.
Hence,  using  \cref{asymtden},
\[
B_{D_{ij,N}^*}((N-1)^2) = \frac{ 6 (N-1)^2 }{\pi^2} + \eps_{ij,N}
\]
with $ |\eps_{ij,N}| \leq C N[1+ \log(N)]$ for every $i$ and $j$. Hence,
\[
\frac{\# E_{N-1}}{N^4} = \sum_{i=0}^{N-1} \sum_{j=0}^{N-1}  B_{D_{ij,N}}(1)
=  \frac{ 6 (N-1)^2}{\pi^2} \frac{1}{N^2} +
 \sum_{i=0}^{N-1} \sum_{j=0}^{N-1}     \frac{\eps_{ij,N}}{N^4}
\]
and
\[
\Big | \sum_{i=0}^{N-1} \sum_{j=0}^{N-1}     \frac{\eps_{ij,N}}{N^4}  \Big |  \leq C \frac{1+ \log(N)}{N},
\]
so that
\[
 \lim_N \frac{\# E_{N-1}}{N^4} = \frac{ 6 }{\pi^2}.
\]
\end{proof}

In order to prove \cref{PropL2bound} we need some preliminary results.
Let $\bi$ and $\bj$ be two vectors in $\tilde V_L$ with $t_\bi < t_\bj$. We say that $\bi$ and $\bj$ are \emph{adjacent} if 
\[
	\left\{ {\bk} \in \tilde V_L : t_\bi < t_{\bk} < t_\bj \right\} =\emptyset.
\]
We say that a vertex $\bk=(k_1,k_2)\in \CL_N$  \emph{lies inside the convex cone} defined by $\bi$ and $\bj$ if
\begin{equation}
\label{def:cc}
 	t_\bi \leq \frac{k_2}{k_1} \leq t_\bj .
\end{equation}


\begin{lemma}
\label{lemma:integer_lc}
Let $L \leq N$. Given two adjacent vectors $\bi$ and $\bj$ in $\tilde V_L$ and a vector
$\bk$ in $\CL_N$ that lies inside the convex cone defined by $\bi$ and $\bj$,  there are non-negative integers $A$ and $B$ such that
\[
\bk=A\bi+B\bj.
\]
\end{lemma}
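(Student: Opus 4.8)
The plan is to exploit the unimodular structure of adjacent Farey fractions. Since $\bi,\bj\in\tilde{V}_L$ satisfy $i_1\geq i_2\geq 0$ and $(0,0)\notin V_L$, their first coordinates are positive ($i_1,j_1\geq 1$), so the slopes $t_\bi=i_2/i_1<t_\bj=j_2/j_1$ are well defined. Through the bijection $t:\tilde{V}_L\to\mathfrak{F}_L$ established above, the adjacency of $\bi$ and $\bj$ means precisely that $t_\bi$ and $t_\bj$ are consecutive terms of the Farey sequence $\mathfrak{F}_L$. First I would invoke the classical unimodularity property of consecutive Farey fractions (see Section 4.5 in \cite{Knuth} or Chapter 6 in \cite{Niven}): if $\frac{i_2}{i_1}<\frac{j_2}{j_1}$ are adjacent in $\mathfrak{F}_L$, then
\[
	i_1 j_2 - i_2 j_1 = 1 .
\]
Equivalently, the matrix with columns $\bi$ and $\bj$ has determinant $1$, so $\{\bi,\bj\}$ is a $\Z$-basis of $\Z^2$.

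Given this, I would solve the linear system $\bk=A\bi+B\bj$ by Cramer's rule. Because the determinant equals $1$, the (a priori real) solution is
\[
	A = k_1 j_2 - k_2 j_1, \qquad B = i_1 k_2 - i_2 k_1 .
\]
Both quantities are integers, since $\bi$, $\bj$, and $\bk$ have integer coordinates and the denominator is $1$. This settles integrality with no further work; the only remaining point is the sign of $A$ and $B$.

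For non-negativity I would use the cone condition \cref{def:cc}. The case $\bk=(0,0)$ is trivial ($A=B=0$); otherwise \cref{def:cc} forces $k_1\geq 1$, since it presupposes $k_1\neq 0$ and $k_1\geq 0$ because $\bk\in\CL_N$. Multiplying the left inequality $i_2/i_1\leq k_2/k_1$ by $i_1 k_1>0$ gives $i_1 k_2 - i_2 k_1\geq 0$, i.e. $B\geq 0$; multiplying the right inequality $k_2/k_1\leq j_2/j_1$ by $k_1 j_1>0$ gives $k_1 j_2 - k_2 j_1\geq 0$, i.e. $A\geq 0$. This completes the argument.

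The only genuine input is the unimodularity of adjacent Farey fractions; everything else is elementary linear algebra and a single inequality manipulation. I expect the main (and essentially the only) conceptual step to be recognizing that adjacency in $\tilde{V}_L$ transports, via the bijection $t$, to the consecutive-terms relation in $\mathfrak{F}_L$, and hence to the determinant identity $i_1 j_2 - i_2 j_1=1$. It is precisely this identity that forces the otherwise real coefficients $A,B$ produced by Cramer's rule to be integers, which is what makes the decomposition work.
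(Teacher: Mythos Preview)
Your proof is correct and follows essentially the same route as the paper: both invoke the unimodularity identity $i_1 j_2 - i_2 j_1 = 1$ for adjacent Farey fractions, obtain the same formulas $A = k_1 j_2 - k_2 j_1$ and $B = i_1 k_2 - i_2 k_1$, and verify non-negativity from the cone inequalities. The only cosmetic difference is that the paper derives these coefficients via a parallelogram construction (intersecting lines parallel to $\bi$ and $\bj$), whereas you invoke Cramer's rule directly.
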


  \begin{proof}
  Let $t_{\bi}=i_2/i_1< j_2/j_1=t_{\bj}$. Since $t_{\bi}$ and $t_{\bj}$ are consecutive in the Farey set $\mathfrak F_L$, by
 \cite[Thm. 6.1]{Niven} we have
  \begin{equation}\label{relFar}
  i_1 j_2 - i_2 j_1=1.
  \end{equation}
  The equation of the line $r$ that is parallel to $\bi$ and passes through $\bk=(k_1,k_2)$ is
  \[
  	y-k_2 = \frac{i_2}{i_1}(x-k_1).
  \]
  The point $(x_0,y_0)$ where $r$ intersects $y=x j_2/j_1$ is
\[
	x_0= \frac{  i_1 k_2 -  i_2 k_1}{  i_1 j_2 - i_2 j_1} j_1,
		\quad y_0= \frac{  i_1 k_2 - i_2 k_1}{  i_1 j_2 - i_2 j_1} j_2.
\]
Let $B:=\frac{  i_1 k_2 -  i_2 k_1}{  i_1 j_2 - i_2 j_1}$. By \cref{def:cc} and \cref{relFar} $B$ is integer and nonnegative. In the same way, the intersection between the line that is parallel to $\bj$ and passes by $\bk$, and the line $y=x i_2/i_1$, is a point
\[
	(x_1,y_1)=A( i_1, i_2),\quad \text{where}\quad A=\frac{ j_2 k_1 - j_1 k_2}{  i_1 j_2 - i_2 j_1} \in \N.
\]
We conclude that $\bk = (x_1,y_1)+(x_0,y_0) = A\bi +B\bj$, with $A,B \in \N$.
    \end{proof}
\begin{lemma}
\label{lemma:maxangle}
Let $L\in \N$ be fixed. Let
\begin{equation*}
	\beta_k\text{ be the angle between } \bu_k=\left(1,\frac kL\right) \text{ and } \bu_{k+1}=\left(1,\frac{k+1}{L}\right),
\end{equation*}
for $k=0,\ldots,L-1$.
Then
\begin{equation}
\label{eq:maxangle}
		\cos(\beta_k)< \cos(\beta_{k+1})\quad \text{for }k=0,\ldots,L-1.
\end{equation}
\end{lemma}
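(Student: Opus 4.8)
The plan is to pass from the cosines of the angular gaps to the gaps themselves by introducing the polar angles of the vectors $\bu_k$, and then to exploit the monotonicity of the elementary trigonometric functions. Write $\theta_k := \arctan(k/L)$ for the angle that $\bu_k = (1, k/L)$ makes with the positive horizontal axis. Since the slopes $k/L$ are strictly increasing in $k$ and every $\bu_k$ lies in the closed first quadrant, the sequence $\theta_k$ is strictly increasing and takes values in $[0,\pi/2)$; consequently $\beta_k = \theta_{k+1} - \theta_k$ and $0 < \beta_k < \pi/2$ (positivity of the dot product $\bu_k\cdot\bu_{k+1} = 1 + k(k+1)/L^2 > 0$ confirms $\beta_k < \pi/2$). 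Because cosine is strictly decreasing on $(0,\pi/2)$, the desired inequality $\cos(\beta_k) < \cos(\beta_{k+1})$ is equivalent to showing that the gaps $\beta_k$ are strictly decreasing in $k$.

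The key step is to compute these gaps explicitly. Applying the tangent subtraction formula to $\beta_k = \theta_{k+1} - \theta_k$ yields
\begin{equation*}
	\tan(\beta_k) = \frac{\tan\theta_{k+1} - \tan\theta_k}{1 + \tan\theta_{k+1}\tan\theta_k} = \frac{\tfrac{k+1}{L} - \tfrac{k}{L}}{1 + \tfrac{(k+1)k}{L^2}} = \frac{L}{L^2 + k(k+1)}.
\end{equation*}
Since $k \mapsto k(k+1)$ is strictly increasing on $\N$, the denominator $L^2 + k(k+1)$ strictly increases with $k$, and hence $\tan(\beta_k)$ strictly decreases with $k$.

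To conclude, I would invoke that tangent is strictly increasing on $(0,\pi/2)$: from $\tan(\beta_{k+1}) < \tan(\beta_k)$ with both angles in $(0,\pi/2)$ one gets $\beta_{k+1} < \beta_k$, and then the strict decrease of cosine on $(0,\pi/2)$ turns this into $\cos(\beta_k) < \cos(\beta_{k+1})$, which is exactly \cref{eq:maxangle}. There is essentially no hard step here; the only real choice is to recognize that routing the argument through the polar angles and the tangent of their difference is far cleaner than differentiating or directly comparing the explicit expression $\cos(\beta_k) = \bigl(L^2 + k(k+1)\bigr)/\sqrt{(L^2+k^2)(L^2+(k+1)^2)}$. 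The one minor point to note is the index range: the formula for $\tan(\beta_k)$ is valid for every $k \ge 0$, so the monotonicity, and hence the stated inequality, holds throughout the claimed range.
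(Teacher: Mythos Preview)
Your proof is correct and takes a genuinely different route from the paper's. The paper computes the explicit formula
\[
\cos(\beta_k) = \frac{L^2+k(k+1)}{\sqrt{(L^2+k^2)(L^2+(k+1)^2)}}
\]
and then verifies $\frac{d}{dk}\cos(\beta_k)>0$ by treating $k$ as a continuous variable. You instead pass to the polar angles $\theta_k=\arctan(k/L)$, apply the tangent subtraction formula to obtain the much simpler expression $\tan(\beta_k)=L/(L^2+k(k+1))$, and read off monotonicity of $\beta_k$ directly from the denominator. Your argument is entirely elementary (no differentiation of a two-term radical product) and makes the geometric content transparent: the angular gaps shrink because $k(k+1)$ grows. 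The paper's approach is more direct in that it never leaves the quantity $\cos(\beta_k)$, but it hides the mechanism inside a derivative computation that the reader must trust or reproduce.
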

\begin{proof}
A straightforward computation shows that
\[
	\cos(\beta_k) = \frac{\bu_k\cdot\bu_{k+1}}{|\bu_k||\bu_{k+1}|}=\frac{L^2+k(k+1)}{(L^2+k^2)^{1/2}(L^2+(k+1)^2)^{1/2}}.
\]
Since $\frac{d}{dk} \cos(\beta_k)>0$, we conclude that $\cos(\beta_k)$ is monotone increasing.
\end{proof}

\begin{proof} (Proof of \cref{PropL2bound}) In order to prove the upper bound in \cref{eq:stima1}, we first draw an estimate on a continuous approximation of  $\Gamma_{G_{L},d_2}$. Let
\[
	\bi=(i_1,i_2),\quad \bj=(j_1,j_2),\quad \bk=x\bi+y\bj,\quad \text{with}\quad i_1,i_2,j_1,j_2,x,y\in \mathbb R_+
\]
and
\begin{equation*}
	|\bi|=\sqrt{i_1^2+i_2^2} = 1,\quad |\bj|=\sqrt{j_1^2+j_2^2} =1.
\end{equation*}
A straightforward computation shows
\[
	\delta(x,y,\theta):= \frac{|\bk|}{|x\bi|+|y\bj|}=\frac{\sqrt{x^2+y^2 +2xy \theta}}{x+y},
\]
where $\theta:=\bi\cdot\bj=i_1j_1+i_2j_2$. For fixed $x,y$,
\begin{equation*}
\text{the mapping $\theta \mapsto \delta(x,y,\theta)$ is a monotone increasing function}
\end{equation*}
 and, since $0\leq \bi\cdot\bj \leq 1$, one immediately gets the bound
\[
	\frac{1}{\sqrt 2}\leq \inf_{x,y\in \mathbb R_+ } \frac{\sqrt{x^2+y^2}}{x+y} =\inf_{x,y\in \mathbb R_+ } \delta(x,y,0)\leq \delta(x,y,1)=1.
\]
On the other hand, for fixed $\theta\in(0,1)$ we consider
\[
	f(x,y)=\delta^2(x,y,\theta)= 1+ \frac{2xy}{(x+y)^2}(\theta-1).
\]
Since
\[
	\nabla f(x,y)=2\frac{y-x}{(x+y)^3}\left( y,-x\right),
\]
the line $y=x$ is a set of stationary points for $f$. Setting $\bv=(-1,1)/\sqrt 2$, $P=(x,x)$,
\[
	f(P+t\bv)=1+\frac{1-\theta}{2}\left(\frac{t^2}{2x^2}-1\right),
\]
we see that $y=x$ is the set of global minimum points for $f$ and thus for the mapping $(x,y)\mapsto \delta(x,y,\theta)$, with
\begin{equation}
\label{eq:estimatexy}
	\min_{x,y\in \mathbb R_+ }\delta(x,y,\theta)=\sqrt{\frac{1+\theta}{2}}.
\end{equation}
For $1 \leq L \leq N-1$, denote $\tilde V_L=\{\bv_0,\bv_1,\ldots,\bv_M\}$, indexed in increasing slope order (i.e., so that  $t_{\bv_0}<t_{\bv_1}  < \ldots < t_{\bv_M}$), and let $\alpha_k$ be the angle between $\bv_k$ and $\bv_{k+1}$. By \cref{lemma:integer_lc} for all $\bk \in \CL_N$ there exist two adjacent vectors $\bv_k,\bv_{k+1}$ and two integers $A,B\in \N$ such that $\bk=A\bv_k +B \bv_{k+1}$, and therefore, by \cref{eq:estimatexy}
\begin{equation}
\label{eq:estimateak}
		 \frac{|\bk|}{|A\bv_k|+|B\bv_{k+1}|}\geq \sqrt{\frac{1+\cos(\alpha_k)}{2}}.
\end{equation}
We estimate
\begin{align}
	\Lambda_L &:= \min\left\{  \frac{d_2(\bi,\bj)}{c_{G_L}(\bi,\bj)} :  \bi,\bj\in \CL_N,\ \bi \not =\bj\right\} \nonumber\\
		&= \min\left\{  \frac{d_2({\mathbf 0},\bk)}{c_{G_L}({\mathbf 0},\bk)} :  \bk \in \CL_N,\ \bk \not ={\mathbf 0} \right\} \nonumber\\
		&= \min_{\bk \in \CL_N, \bk \not ={\mathbf 0}} \max\left\{ \frac{|\bk|}{|A\bi|+|B\bj|} :
			A\bi+B\bj=\bk,\ \bi,\bj \in V_L,\ A,B \in \mathbb N  \right\} \nonumber\\
		& \! \stackrel{\cref{eq:estimateak}}{\geq} \min_{k=0,\ldots,M} \sqrt{\frac{1+\cos(\alpha_k)}{2}}.\label{eq:cos}
\end{align}
Let
\[
	\mathbf U_L:=\left\{\bu_j = \left(1, \frac jL\right), \, j=0,\ldots,L-1\right\},
		\ \mathbf V_L:=\left\{\bv_k = \left(1, t_{\bv_k}\right), \, \bv_k\in \tilde V_L\right\}.
\]
Clearly $\mathbf U_L \subseteq \mathbf V_L$, since the slopes $t_{\bv_k}$ include all elements in the Farey sequences $\mathfrak F_1,\ldots,\mathfrak F_L$. Define the sets of angles
\[
	\mathcal A:=\big\{\alpha_k = {\widehat{\bv_k\bv}}_{k+1} : \bv_k \in \mathbf V_L\big\},\qquad
	\mathcal B:= \big\{\beta_j = {\widehat{\bu_j\bu}}_{j+1} : \bu_j \in \mathbf U_L\big\}.
\]
Since the partitioning of $\pi/4$ induced by $\mathbf V_L$ is finer than that of $\mathbf U_L$
\[
	\max\left\{ \alpha_k : \alpha_k \in \mathbf V_L\right\} \leq \max\left\{ \beta_j : \beta_j \in \mathbf U_L\right\}.
\]
Since the cosine function is monotone decreasing in $[0,\pi/4]$, from \cref{eq:cos} we obtain
\begin{align*}
	\Lambda_L&\geq \min_{k=0,\ldots,M} \sqrt{\frac{1+\cos(\alpha_k)}{2}} \geq  \min_{j=0,\ldots,L-1}\sqrt{\frac{1+\cos(\beta_j)}{2}} \\
		&\stackrel{\cref{eq:maxangle}}{=} \sqrt{\frac{1+\cos(\beta_0)}{2}} = \sqrt{\frac12+ \frac{L}{2\sqrt{1+L^2}}},
\end{align*}
which thus implies
\[
	\Gamma_{G_{L},d_2}=1-\Lambda_L \leq 1 - \sqrt{\frac12+ \frac{L}{2\sqrt{1+L^2}}}.
\]
The upper bound in \cref{eq:stima1} then follows by a simple algebraic manipulation, setting
\[
	c(L):=\left(2 + \sqrt{2+ \frac{2L}{\sqrt{1+L^2}}}\right)^{-1}.
\]

\smallskip

In order to find the lower bound in \cref{eq:stima1},  by \cref{prop:lb}, it is enough to evaluate the error $\CE_G(b)$ for a specific choice of $b=\mu-\nu$. We choose as $\mu$ a unit mass concentrated in $\bi =(L,0)$, as $\nu$ a unit mass concentrated in $\bj=(L,1)$, under the assumption that $N\geq 2L$. A simple computation yields
\[
	\bk:=\bi+\bj=(2L,1),\quad |\bk|=\sqrt{4L^2+1},\quad \frac{|\bk|}{|\bi|+|\bj|}= \frac{\sqrt{4L^2+1}}{L+\sqrt{L^2+1}},
\]
and therefore
\[
	\CE_G(b) = 1 - \frac{\sqrt{4L^2+1}}{L+\sqrt{L^2+1}}.
\]
Finally, in order to estimate the asymptotic behaviour in \cref{eq:asympt}, we compute the power expansions
\begin{align*}
	1 - \frac{\sqrt{4L^2+1}}{L+\sqrt{L^2+1}} & = \frac{1}{8L^2} -\frac{11}{128L^4}+\frac{61}{1024L^6}+O\left( \frac{1}{L^8}\right) \quad \text{ for } L\to +\infty,\\
	c(L)\left(1-\frac{L}{\sqrt{1+L^2}}\right) & = \frac{1}{8L^2} -\frac{11}{128L^4}+\frac{69}{1024L^6}+O\left( \frac{1}{L^8}\right) \quad \text{ for } L\to +\infty.
\end{align*}
Since the expansions are identical up to the $1/L^4$ term, owing to \cref{eq:stima1}, we conclude \cref{eq:asympt}.

\end{proof}


\section{Minimum flow on reduced graphs: general theory}
\label{section:3}

In this section we give all the details of the results
presented in  \cref{sec:mincostflow}-\cref{Sec:mainid}.

\subsection{Dual formulation of a minimum flow problem}
\label{ssec:duality}

%
%
In what follows we need some basic results on the dual formulation
of the minimum flow problem   \cref{minflowprim}.
Letting
\[
 \CD(G,b):=\{ \varphi: V \to \RE:  \varphi(u) - \varphi(v) \leq c(u,v) \, \, \forall \,\, (u, v) \in E    \},
\]
the  dual problem corresponding to  \cref{minflowprim} is
\[
 \max_{\varphi \in  \CD(G,b) }
 \sum_{ u \in V} b(u) \varphi(u).
\]
The {\it  Strong Duality theorem} (see e.g. Thm. 9.6 in \cite{Ahuja})  states that
\[
\max_{\varphi \in \CD(G,b)} \sum_{ u \in V} b(u) \varphi(u) =  \min_{f \in \CF(G,b) } \sum_{ (u,v) \in E} c(u,v) f(u,v).
\]
Moreover, the so-called {\it Complementary Slackness Optimality Conditions} ensure that if a $b$-flow is optimal for $(G,c)$, then
there exists a potential $\varphi$ in $\CD(G,b)$ such that $ \varphi(u) - \varphi(v) = c(u,v)$ whenever $f(u,v)>0$. Conversely, if $f$ is a $b$-flow
and there is a potential  $\varphi$ in $\CD(G,b)$ such that $ \varphi(u) - \varphi(v) = c(u,v)$ whenever $f(u,v)>0$, and that $f(u,v)=0$ if
 $ \varphi(u) - \varphi(v) < c(u,v)$, then $f$ and $\varphi$ are  optimal solutions for the primal/dual problem.
 See e.g. Theorems  9.4 and  9.8 in  \cite{Ahuja}.

\subsection{Flows on Reduced Graphs}
\label{ssec:equiv}

Assuming that $X=Y$, in place of considering the  flow problem on a graph with nodes $X \cup Y$, we start by
considering a flow problem for  $b=\mu-\nu$
on the  ``reduced" set of nodes $V=X$.
Assuming that  $\mu(x)>0$ and $\nu(x)>0$ for all $x \in X$,  the set of nodes in this problem
has cardinality $n$ while in $G_{X \to Y}$ has cardinality $2n$, where $n=|X|$. For the reduced graph with nodes set $X$, we choose, as a set of edges,
the set of all possible directed links on $X$, that is $G=K_n$,
$K_n$ being the complete (directed) graph on $X$ (without self-loops). Although this choice
still considers a large number of  edges (more precisely $n(n-1)$) it will be useful in the sequel.

The next proposition summarizes  some simple relations between the above defined  problems.

\begin{proposition}
\label{prop_equivalence}
Let $\mu$ and $\nu$ be two probability vectors on $X$ and $b:=\mu-\nu$.
\begin{itemize}
\item[(a)]
If   $c(x,x)=0$ for every $x \in X$, then
\[
 	\CW_c(\mu,\nu) \geq   F_{K_n,c} (b).
\]
\item[(b)]
If, in addition,  $c(x,y) \leq c(x,z)+c(z,y)$ for every $x,y,z$ in $X$, then
\[
	\CW_c(\mu,\nu)=  F_{K_n,c} (b).
\]
\end{itemize}
\end{proposition}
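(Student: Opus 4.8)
The plan is to establish the two inequalities separately: part (a) is the easy bound $F_{K_n,c}(b)\le \CW_c(\mu,\nu)$, obtained by turning a coupling into a flow, and part (b) is the reverse bound $\CW_c(\mu,\nu)\le F_{K_n,c}(b)$, obtained by turning an optimal flow back into a coupling using the triangle inequality. The bookkeeping of marginals versus net supplies is what makes the two directions slightly asymmetric.

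For part (a), I would start from an arbitrary coupling $\pi\in\Pi(\mu,\nu)$ and define a candidate flow on $K_n$ by keeping only its off-diagonal entries, $f(x,y):=\pi(x,y)$ for $x\neq y$, which is nonnegative and hence a genuine flow on the complete graph. Using the marginal constraints, the net outflow at a node $u$ is $\sum_{y\neq u}\pi(u,y)-\sum_{x\neq u}\pi(x,u)=(\mu(u)-\pi(u,u))-(\nu(u)-\pi(u,u))=\mu(u)-\nu(u)=b(u)$, so $f\in\CF(K_n,b)$. Since $c(u,u)=0$, its cost is $\sum_{x\neq y}c(x,y)\pi(x,y)=\sum_{x,y}c(x,y)\pi(x,y)$, exactly the cost of $\pi$. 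Thus $F_{K_n,c}(b)$ is bounded above by the cost of every $\pi$, and taking the infimum over $\Pi(\mu,\nu)$ gives $F_{K_n,c}(b)\le\CW_c(\mu,\nu)$.

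For part (b) I need the reverse inequality, and this is the step I expect to be the main obstacle, because a general $b$-flow may route mass along long paths and around cycles, whereas a coupling must move mass directly from $x$ to $y$. I would take an optimal $b$-flow $f$ and apply the flow decomposition theorem (see \cite{Ahuja}), writing $f$ as a finite sum of path-flows $w_P\mathbf 1_P$, each path running from a node with $b>0$ to a node with $b<0$, plus cycle-flows. Two reductions then apply: cycles contribute nonnegative cost because $c\ge 0$, so they may be discarded; and a path from $s$ to $t$ visiting $s=u_0,u_1,\dots,u_k=t$ may be shortcut to the single edge $(s,t)$, its cost $\sum_i c(u_i,u_{i+1})$ dropping to at least $c(s,t)$ by iterating the triangle inequality. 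This yields $\sum_{P}w_P\,c(s_P,t_P)\le\sum_{(u,v)}c(u,v)f(u,v)=F_{K_n,c}(b)$.

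Finally I would reassemble the path weights into a coupling. Setting $\pi(x,y):=\sum_{P:\,s_P=x,\,t_P=y}w_P$ for $x\neq y$ gives off-diagonal row sums $\sum_{P:\,s_P=x}w_P=(\mu(x)-\nu(x))^+$ and column sums $(\nu(y)-\mu(y))^+$, since in the decomposition the total weight of paths leaving a supply node $x$ equals its excess $b(x)$. Adding the diagonal mass $\pi(x,x):=\min(\mu(x),\nu(x))$ repairs the marginals exactly, so $\pi\in\Pi(\mu,\nu)$; and because $c(x,x)=0$, its cost equals $\sum_{P}w_P\,c(s_P,t_P)\le F_{K_n,c}(b)$. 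Hence $\CW_c(\mu,\nu)\le F_{K_n,c}(b)$, which combined with part (a) gives equality. (As a cross-check, the same conclusion follows from the dual pairing of \cref{ssec:duality}: on the complete graph the constraints $\varphi(u)-\varphi(v)\le c(u,v)$ describe precisely the $1$-Lipschitz potentials appearing in Kantorovich–Rubinstein duality, so the two dual programs coincide.)
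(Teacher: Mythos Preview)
Your proof is correct. Part (a) matches the paper exactly. For part (b), however, you take a genuinely different route from the paper.

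The paper introduces an auxiliary bipartite graph $G_b$ on the supply/demand nodes $S_b\cup D_b$ and proves the chain $F_{G_b,c}(b)\ge \CW_c(\mu,\nu)\ge F_{K_n,c}(b)$ together with $F_{G_b,c}(b)=F_{K_n,c}(b)$; the last equality is obtained via LP duality (complementary slackness shows that along any path carried by an optimal $K_n$-flow the dual potential differences telescope, forcing the triangle inequality to be tight, and then strong duality closes the argument). Your argument bypasses $G_b$ and duality entirely: you apply flow decomposition to an optimal $b$-flow, drop cycles, shortcut each path by the triangle inequality, and reassemble the path weights (plus the diagonal $\min(\mu,\nu)$) into a coupling. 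This is more elementary and self-contained; the paper's detour through $G_b$ and complementary slackness yields the slightly stronger side fact that optimal flows saturate the triangle inequality along every used path, but that extra information is not needed for the proposition itself. Your parenthetical remark about Kantorovich--Rubinstein duality is in the same spirit as the paper's duality machinery and would give yet another valid proof.
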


In order to prove (a) and (b), we introduce an auxiliary bipartite graph with set of nodes
$V_b:= S_b \cup D_b \subset X$,
where
$S_b:=\{ x : b(x)>0\}$ and $D_b:=\{ x: b(x)<0\}$,
and set of edges  $E_b:=\{ (x,y) \in S_b \times  D_b\}$. Denoting by $G_b$ the bipartite graph $(V_b,E_b)$,
the corresponding minimum cost flow problem can be rewritten as
\[
	F_{G_b,c}(b):= \min_{f \in \CF(G_b,b)  } \sum_{ x \in S_b, y \in D_b} c(x,y) f(x,y).
\]
The cardinality of $E_b$ is  $a^+ a^- n^2$, where $a^{\pm}$ is the fraction
of vertexes  $x$ with $b(x)>0$, $b(x)<0$  respectively, hence $a^+ a^- <1$,   while the set of edges $E_{X \to X}$ has cardinality $n^2$.

The next Lemma specifies the relations between $F_{G_b,c}(b)$, the minimum of the cost flow problem $F_{K_n,c} (b)$, and the inf of Kantorovich's transportation problem $\CW_c(\mu,\nu)$.

\begin{lemma}
\label{lemma_aux}
Let $b:X\to\mathbb{R}$ be any function.
\begin{itemize}
\item[(i)]
If   $c(x,x)=0$ for every $x \in X$, then $F_{G_b,c}(b) \geq F_{K_n,c} (b)$ and,
for $b=\mu-\nu$,
\[
	F_{G_b,c}(b) \geq \CW_c(\mu,\nu).
\]
\item[(ii)]
If, in addition,  $c(x,y) \leq c(x,z)+c(z,y)$ for every $x,y,z$ in $X$, then
\[
	F_{G_b,c}(b) =F_{K_n,c} (b).
\]
\end{itemize}
\end{lemma}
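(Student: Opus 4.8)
The plan is to establish the two inequalities of part (i) by two independent explicit constructions, and then to obtain the reverse inequality needed for part (ii) by a flow-decomposition argument that exploits the triangle inequality. Throughout I use that $E_b\subseteq S_b\times D_b$ with $S_b,D_b$ disjoint subsets of the common vertex set $X$ on which $K_n$ is built, and that every $u\notin V_b$ has $b(u)=0$.

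For the first inequality in (i), $F_{G_b,c}(b)\geq F_{K_n,c}(b)$, I would take an optimal $b$-flow $f^\ast$ on $G_b$ and extend it to $K_n$ by assigning zero flow to every edge of $K_n$ not already in $E_b$. Since the edges of $E_b$ only join $S_b$ to $D_b$ and the vertices outside $V_b$ carry no demand, the extended function satisfies the balance law \cref{bflow} at every vertex of $K_n$ and has the same cost as $f^\ast$; hence $F_{K_n,c}(b)\leq F_{G_b,c}(b)$. (This step does not use $c(x,x)=0$.) For the second inequality, $F_{G_b,c}(b)\geq\CW_c(\mu,\nu)$ with $b=\mu-\nu$, I would convert $f^\ast$ into a coupling $\pi\in\Pi(\mu,\nu)$ by placing the ``stay-put'' mass on the diagonal: set $\pi(x,x):=\min\{\mu(x),\nu(x)\}$, $\pi(x,y):=f^\ast(x,y)$ for $(x,y)\in E_b$, and $\pi(x,y):=0$ otherwise. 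The marginal conditions follow from the conservation identities \cref{bflow} at the sources $x\in S_b$ (where $\mu(x)>\nu(x)$) and the sinks $y\in D_b$ (where $\nu(y)>\mu(y)$), using $\min\{\mu,\nu\}=\nu$ on $S_b$ and $=\mu$ on $D_b$. Because $c(x,x)=0$, the diagonal contributes nothing to the cost, so $\pi$ has cost equal to $F_{G_b,c}(b)$, and the definition \cref{eq:kantorovich} of $\CW_c$ as an infimum over $\Pi(\mu,\nu)$ gives $\CW_c(\mu,\nu)\leq F_{G_b,c}(b)$.

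For part (ii) only $F_{G_b,c}(b)\leq F_{K_n,c}(b)$ remains. I would start from an optimal $b$-flow on $K_n$ and apply the standard flow-decomposition theorem (e.g.\ \cite{Ahuja}), writing it as a nonnegative combination of flows along directed paths and directed cycles, each path running from a node of positive supply (hence in $S_b$) to one of negative supply (hence in $D_b$). Since $c\geq 0$, discarding all cycle components does not increase the cost and leaves a $b$-flow. Then, for each path $\gamma$ from $s\in S_b$ to $t\in D_b$ carrying weight $\lambda$, I reroute that weight onto the single direct edge $(s,t)\in E_b$: applied repeatedly along the edges of $\gamma$, the triangle inequality gives $c(s,t)\leq\sum_{\text{edges of }\gamma}c$, so the cost does not increase, while swapping a path for its endpoints' direct edge preserves the balance \cref{bflow} at every vertex. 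The result is a feasible $b$-flow on $G_b$ of cost at most $F_{K_n,c}(b)$, yielding the claimed equality when combined with (i).

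The routine but error-prone part is the marginal verification in (i), which must be carried out by cases ($x\in S_b$, $x\in D_b$, $x\notin V_b$) for both $\mu$ and $\nu$. The genuine obstacle is the rerouting step in (ii): one must ensure that flow decomposition produces paths whose endpoints are honest sources and sinks, so that each shortcut edge actually lies in $E_b$, and that shortcutting all paths simultaneously yields one admissible $b$-flow rather than a mere collection of path flows. Both points are settled by the conservation bookkeeping, and the triangle inequality is precisely what makes each shortcut cost-nonincreasing.
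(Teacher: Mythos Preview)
Your argument is correct. Part (i) is essentially identical to the paper's: the first inequality is the subgraph monotonicity \cref{simpleineq}, and the second is the same diagonal-plus-flow coupling construction.

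For part (ii) you take a genuinely different route. You argue purely on the primal side: decompose an optimal $K_n$-flow into source--sink paths and cycles, drop the cycles (harmless since $c\geq 0$), and shortcut each path $s\to t$ onto the single edge $(s,t)\in E_b$, using the triangle inequality to bound the cost. The paper instead works through LP duality: it takes the optimal dual potential $\varphi$ for $K_n$, observes that along any flow-carrying path from $s\in S_b$ to $t\in D_b$ the complementary slackness equalities telescope to $\varphi(s)-\varphi(t)=\sum c(\gamma_i,\gamma_{i+1})\geq c(s,t)$ by the triangle inequality, and hence that $\varphi$ together with a rerouted flow also satisfies complementary slackness on $G_b$; strong duality then gives $F_{G_b,c}(b)=\sum_v\varphi(v)b(v)=F_{K_n,c}(b)$. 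Your argument is more elementary (no duality machinery, only the flow-decomposition theorem and the triangle inequality) and yields the inequality $F_{G_b,c}(b)\leq F_{K_n,c}(b)$ directly; the paper's approach has the small bonus of showing that the two problems share an optimal dual potential, though this is not needed for the lemma as stated.
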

\begin{proof}
(i) Since $G_b \subset K_n$,  \cref{simpleineq} implies that $F_{G_b,c}(b) \geq F_{K_n,c} (b)$. If $f \in  \CF(G_b,c)$ with $b=\mu-\nu$,
set
\[
	\pi(x,x)= \min(\mu(x),\nu(x)),
\]
and, if $x \in S_b$ and $y\in D_b$,
\[
	\pi(x,y)= f(x,y).
\]
Finally, define $\pi(x,y)=0$ in all the remaining cases.
It is easy to check that such a $\pi$ belongs to $\Pi(\mu,\nu)$  and again the inequality
follows.

(ii) Let $\bar f$  be an optimal $b$-flow for $(K_n,c)$.
For $x$ in $S_b$ and $y$ in $D_b$  let $\Gamma(x,y)$ be  the set of directed paths in $K_n$ from $x$ to $y$ with no loops; for any $\gamma$ in $\Gamma(x,y)$
of length $|\gamma|=t$
let $\bar f(\gamma)=\min\{\bar f(\gamma_1),\dots,\bar f(\gamma_{t})\}$ be the flow through $\gamma$ from $x$ to $y$.
The total flow from $x$ to $y$ can then be defined by
\[
f^*(x,y):=\sum_{\gamma \in \Gamma(x,y)} f(\gamma).
\]
Doing this for any $(x,y)$ in $E_b$, it is easy to see that the resulting $f^*$ is a $b$-flow in $G_b$.
By the Complementary Slackness Conditions,  since $\bar f$  is an optimal $b$-flow for $(K_n,c)$, then there is  $\varphi$ in $\CD_{K_n,b}$ such that $ \varphi(u) - \varphi(v) = c(u,v)$ whenever $\bar f(u,v)>0$.
Now let $x$ in $S_b$ and $y$  in $D_b$ and $\gamma$ in  $\Gamma(x,y)$  with $\bar f(\gamma)>0$.
Since it must be that $\bar f(\gamma_i,\gamma_{i+1})>0$ for $i=1,\dots,|\gamma|-1$, it  follows that $ \varphi(\gamma_i) - \varphi(\gamma_{i+1}) = c(\gamma_i,\gamma_{i+1})$. Hence,
\[
c(x,y) \geq  \varphi(x) - \varphi(y)=\sum_{i=1}^{|\gamma|-1}   \varphi(\gamma_i) - \varphi(\gamma_{i+1}) = \sum_{i=1}^{|\gamma|-1}    c(\gamma_i,\gamma_{i+1})
\geq  c(x,y),
\]
where the last inequality follows by triangle inequality. Summarizing, we have proved that  $c(x,y)= \varphi(x) - \varphi(y)$ if  $f^*(x,y)>0$.
 Conversely, if  $c(x,y)> \varphi(x) - \varphi(y)$, then necessarily $f^*(x,y)=0$. Since clearly $\varphi$ belongs to $\CD(G_b,c)$,  then $f^*$ and $\varphi$ are optimal primal/dual solutions for $(G_b,c)$
by the Slackness Conditions.  Using the Strong Duality theorem we conclude
that $F_{K_n,c}(b)=\sum_{v \in X} \varphi(v) b(v)= F_{G_b,c}(b)$.

\end{proof}

\begin{proof} (Proof of  \cref{prop_equivalence}.)
(a) In order to prove that $\CW_c(\mu,\nu) \geq   F_{K_n,c} (\mu-\nu)$, note that if $\pi \in \Pi(\mu,\nu)$
then $f(x,y):=\pi(x,y)$ for $x\not = y$ belongs to $ \CF(K_n,b)$ for $b=\mu-\nu$. Since $c(x,x)=0$ the inequality follows.
%

(b) Let $b=\mu-\nu$. By \cref{lemma_aux}-(i) and part (a), we have that
\begin{equation}
\label{ineq_chain}
	F_{G_b,c}(\mu-\nu) \geq \CW_c(\mu,\nu) \geq F_{K_n,c}(\mu-\nu).
\end{equation}
On the other hand, by \cref{lemma_aux}-(ii),
\begin{equation}
\label{eq_chain}
	F_{G_b,c}(\mu-\nu) = F_{K_n,c}(\mu-\nu).
\end{equation}
Combining \cref{ineq_chain} and \cref{eq_chain} immediately yields
\[
	\CW_c(\mu,\nu)=F_{K_n,c}(\mu-\nu).
\]
\end{proof}

The previous result shows that one can
compute the EMD  between two normalized measures $\mu$ and $\nu$ (with respect to some ground distance $c=d$) by
solving  the minimum cost flow problem $F_{G_b,d} (\mu-\nu)$.

Although $G_b$ is smaller than  $G_{X \to X}$,  the problem of determining
 $F_{G_b,d} (b)$ is  computationally demanding for relatively small $n$.
Using the previous considerations on the computational cost of a minimum flow problem, it is easy to see that
the time complexity of  computing $F_{G_b,d} (b)$ is  $O(n^3 \log(n))$
and hence it is of the same order of the complexity of computing  $F_{G_{X \to Y},d}(b)$.

\subsection{Error Bounds}
\label{ssec:bound}
%
%
%
%
%
%
In this Section
 we prove the universal upper bound \cref{bound1}, i.e.
a bound
 on $\CE_G(b)$  which depends  only on the geometry of $(G,c)$
and not on the specific $b$.

First of all, note that if $c$ is a distance,  then $c_G$ (defined in \cref{def:cG}) is a distance, and, since $G \subseteq K_n$,
\begin{equation}
\label{eq:increas}
	c_G(x,y) \geq c(x,y)\quad \forall\, x,y\in V.
\end{equation}
Denote by $\gamma^{(x,y)}$ any optimal path in $G$ connecting $x$ to $y$, that is, any path such that
\[
	 \sum_{i=0}^{|\gamma^{(x,y)}|-1}  c\left(\gamma_i^{(x,y)},\gamma_{i+1}^{(x,y) }\right)
	 	\leq \sum_{i=0}^{|\gamma|-1}  c(\gamma_i,\gamma_{i+1})
\]
for any path $\gamma$ in $G$  from $x$ to $y$.  Then
\[
	c_G(x,y)= \sum_{i=0}^{|\gamma^{(x,y)}|-1}  c\left(\gamma_i^{(x,y)},\gamma_{i+1}^{(x,y)} \right).
\]
Note that clearly  $\gamma^{(x,y)}$ need not be unique.
The constant
\[
	\Gamma_{G,c} := \max \left\{ 1- \frac{c(x,y)}{c_G(x,y)} :\ (x,y)\in X^2,\ x \not =y\right\}
\]
provides the bound we are looking for.

\begin{proposition}
\label{prop:lb}
If $c$ is a distance,
then for every $b: X \to \RE$
\[
 0 \leq \CE_G(b) \leq \Gamma_{G,c}.
\]
\end{proposition}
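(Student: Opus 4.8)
The plan is to reduce the whole statement to a single pointwise inequality between the cost function $c$ and its shortest-path companion $c_G$, once both are transported onto the complete graph $K_n$. The lower bound $\CE_G(b)\geq 0$ is immediate: since $G\subseteq K_n$, inequality \cref{simpleineq} gives $F_{G,c}(b)\geq F_{K_n,c}(b)$, so the numerator of $\CE_G(b)$ in \cref{eq:rae} is nonnegative. For the upper bound, the crucial structural fact I would establish first is the identity
\[
F_{G,c}(b)=F_{K_n,c_G}(b),
\]
valid for every $b$: running the minimum cost flow on the sparse graph $G$ with the original cost $c$ is the same as running it on $K_n$ with the shortest-path cost $c_G$ of \cref{def:cG}.

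To prove this identity I would argue by two inequalities via path decomposition. For ``$\leq$'', starting from an optimal $b$-flow $g$ on $K_n$ with cost $c_G$, I reroute, for each arc $(x,y)$ with $g(x,y)>0$, the quantity $g(x,y)$ along an optimal path $\gamma^{(x,y)}$ realizing $c_G(x,y)$ in $G$; superposing these reroutings yields a $b$-flow on $G$ (conservation is preserved since each inserted path has source $x$ and sink $y$), whose $c$-cost equals $\sum_{(x,y)}g(x,y)\,c_G(x,y)$, hence $F_{G,c}(b)\leq F_{K_n,c_G}(b)$. For ``$\geq$'', I note that for every edge $(x,y)\in E$ one has $c_G(x,y)=c(x,y)$ (the direct edge is a path, and \cref{eq:increas} gives the reverse inequality), so any $b$-flow on $G$, viewed as a $b$-flow on $K_n$ supported on $E$, has $c_G$-cost equal to its $c$-cost, giving $F_{K_n,c_G}(b)\leq F_{G,c}(b)$.

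Next I would extract from the definition of $\Gamma_{G,c}$ the pointwise bound $c(x,y)\geq(1-\Gamma_{G,c})\,c_G(x,y)$ for all $x\neq y$ (and trivially for $x=y$); this is a rearrangement of $1-c(x,y)/c_G(x,y)\leq\Gamma_{G,c}$, legitimate because $c_G(x,y)\geq c(x,y)>0$ for $x\neq y$, as $c$ is a distance. Applying this inequality termwise to any $b$-flow $f$ on $K_n$ and then minimizing yields
\[
F_{K_n,c}(b)\geq(1-\Gamma_{G,c})\,F_{K_n,c_G}(b)=(1-\Gamma_{G,c})\,F_{G,c}(b),
\]
the last equality being the identity just proved. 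Rearranging gives $F_{G,c}(b)-F_{K_n,c}(b)\leq\Gamma_{G,c}\,F_{G,c}(b)$, i.e.\ $\CE_G(b)\leq\Gamma_{G,c}$, as desired.

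The main obstacle is the identity $F_{G,c}(b)=F_{K_n,c_G}(b)$, and specifically the ``$\leq$'' direction, where one must verify that superposing the rerouted paths genuinely produces a $b$-flow and that the cost bookkeeping on shared edges is exact; the remaining steps are short rearrangements. One could alternatively try to derive this identity from \cref{prop_equivalence} applied with ground distance $c_G$, but since $b$ here is an arbitrary function rather than a difference of probability vectors, the direct path-decomposition argument seems cleaner.
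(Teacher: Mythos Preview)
Your proof is correct and rests on the same two ingredients as the paper's: the pointwise bound $c(x,y)\geq(1-\Gamma_{G,c})\,c_G(x,y)$ and the rerouting of a $K_n$-flow along optimal $G$-paths to produce a $b$-flow on $G$. The only difference is organizational. You isolate the identity $F_{G,c}(b)=F_{K_n,c_G}(b)$ as a standalone lemma and then do a one-line termwise comparison of $c$ against $(1-\Gamma_{G,c})c_G$ on $K_n$; the paper instead interleaves the two steps, starting from an arbitrary $f\in\CF(K_n,b)$, applying the pointwise inequality on edges outside $E$, and rerouting in the same pass to build the flow $f'\in\CF(G,b)$. Your modular version has the advantage of making the identity $F_{G,c}=F_{K_n,c_G}$ explicit and reusable, while the paper's single-pass argument is marginally more direct. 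Both hinge on exactly the verification you flag as the main obstacle, namely that the superposition of rerouted paths is a genuine $b$-flow on $G$ with the claimed cost.
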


\begin{proof} 
Clearly, it suffices to prove  that  $F_{G,c}(b) -F_{K_n,c} (b) \leq  \Gamma_{G,c}  F_{G,c} (b)$, i.e.,
\[
 	(1-\Gamma_{G,c})F_{G,c}(b)  \leq F_{K_n,c} (b).
\]
First of all, we notice that by definition of $\Gamma_{G,c}$
\[
	c_G(x,y) -c(x,y)  =\left(1- \frac{c(x,y)}{c_G(x,y)}\right)c_G(x,y) \leq \Gamma_{G,c}c_G(x,y),
\]
and therefore
\begin{equation}
\label{eq:stimac1}
 	(1-\Gamma_{G,c})c_G(x,y)\leq c(x,y).
\end{equation}
Let $E$ be the set of edges of $G$, for every $(x,y) \not \in E$, let $\gamma^{(x,y)}$ be  an optimal  path (with no loops) in $G$. Now let $f \in \CF(K_n,b)$, then
\begin{align*}
	\sum_{x\not =y}  f(x,y) c(x,y)  &= \sum_{(\alpha,\beta) \in E} f(\alpha,\beta) c(\alpha,\beta) +\sum_{(x,y) \not \in E} f(x,y) c(x,y)  \\
		& \geq  \sum_{ (\alpha,\beta) \in E} f(\alpha,\beta) c(\alpha,\beta) \\
		&\qquad +\sum_{(x,y) \not \in E}   f(x,y) (1-\Gamma_{G,c})
			 \sum_{i=0}^{|\gamma^{(x,y)}|-1}  c\left(\gamma_i^{(x,y)},\gamma_{i+1}^{(x,y) }\right)     \\
		&=\sum_{(\alpha,\beta) \in E} \Big [  f(\alpha,\beta) + (1-\Gamma_{G,c})
			 \sum_{(x,y) \not \in E: (\alpha,\beta) \in \gamma^{(x,y)}}  f(x,y) \Big ]   c(\alpha,\beta) \\
	  	&\geq (1-\Gamma_{G,c})   \sum_{(\alpha,\beta) \in E}   \Big[  f(\alpha,\beta) +
			 \sum_{(x,y) \not \in E: (\alpha,\beta) \in \gamma^{(x,y)}}  f(x,y)  \Big] c(\alpha,\beta),
\end{align*}
where we used also \cref{eq:stimac1} and that, by \cref{eq:increas}, $ \Gamma_{G,c} \geq 0$. Letting
\[
f'(\alpha,\beta)=
\begin{cases}
  0 & \text{if $(\alpha,\beta) \not \in E$}    \\
  f(\alpha,\beta) + \sum_{(x,y) \not \in E: (\alpha,\beta) \in \gamma^{(x,y)}}  f(x,y) & \text{if $(\alpha,\beta) \in E$},   \\
\end{cases}
\]
one can rewrite the previous inequality as
\[
\sum_{x \not =y} f(x,y) c(x,y) \geq  (1-\Gamma_{G,c})   \sum_{(x,y)\in E } f'(x,y) c(x,y).
\]
To conclude, one checks that $f'$ belongs to $\CF(G,b)$ ,
and hence
\[
\sum_{x \not =y} f(x,y) c(x,y) \geq  (1-\Gamma_{G,c}) F_{G,c} (b).
\]
\end{proof}


\section{Numerical experiments}\label{section:5}
In this section, we report the results of our numerical experiments.
The goal of our experiments is to address the following research questions:

\begin{enumerate}
\item Which is the most efficient Minimum Cost Flow algorithm from those available in the literature, considering that our instances of optimal transport have a specific geometric cost function?
\item How fast can we exactly compute the Wasserstein distance of order 1 as a function of the ground distance and as a function of the 2D histogram size?
\item How tight is in practice the bound given in \cref{PropL2bound} as a function of the parameter $L$?
\item How does our approach compare with other state-of-the-art algorithms?
\end{enumerate}

\noindent In order to answer these questions we have run several experiments using the Discrete Optimal Transport Benchmark (DOTmark) \cite{Dotmark}, which has 10 classes of grey scale images related to randomly generated images, classical images, and real data from microscopy images of mitochondria.
In each class there are 10 different grey scale images.
Every image is given at the following pixel resolutions: $32\times32$, $64\times64$, $128\times128$, $256\times256$, and $512\times512$.
\cref{dot:2} show the {\it Classical} and the {\it Microscopy} images, respectively.

\begin{table}[ht!]
\centering
\setlength\tabcolsep{1pt}
\begin{tabular}{cccccccccc}
  \includegraphics[width=1.2cm]{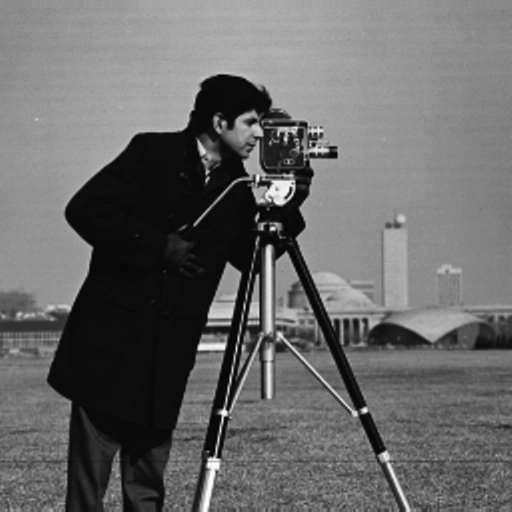}  & \includegraphics[width=1.2cm]{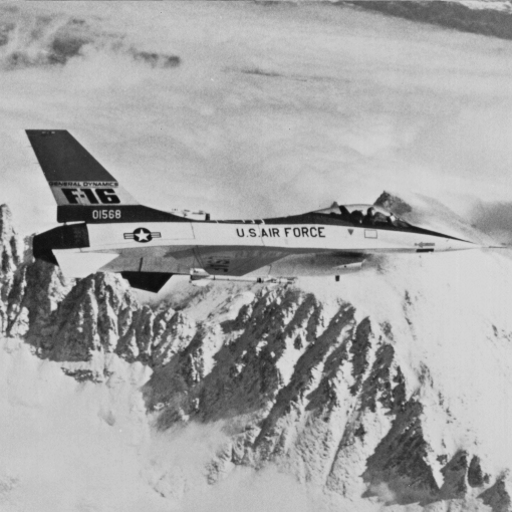} &
  \includegraphics[width=1.2cm]{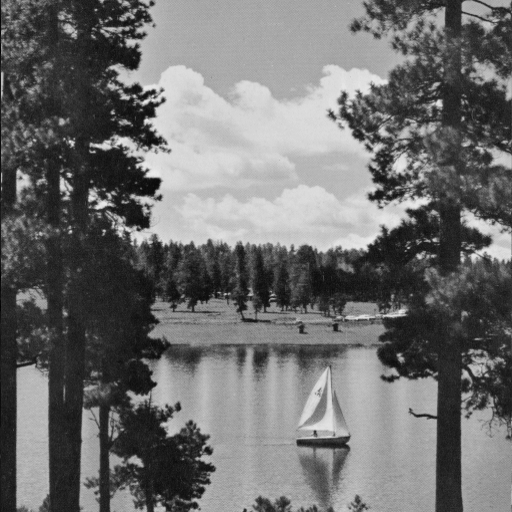} & \includegraphics[width=1.2cm]{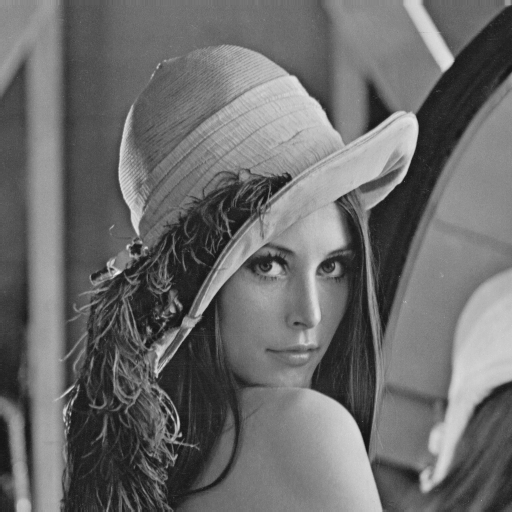} &
  \includegraphics[width=1.2cm]{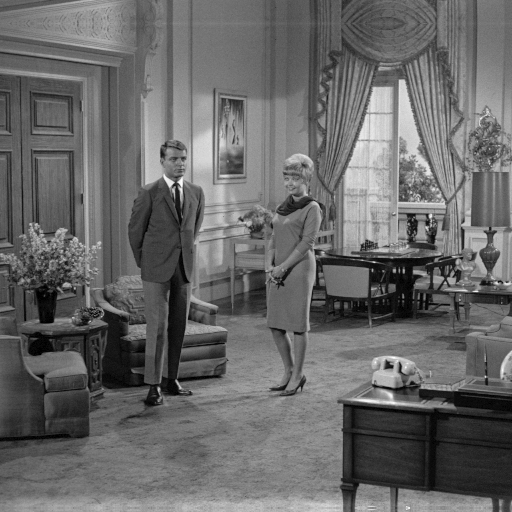} &
  \includegraphics[width=1.2cm]{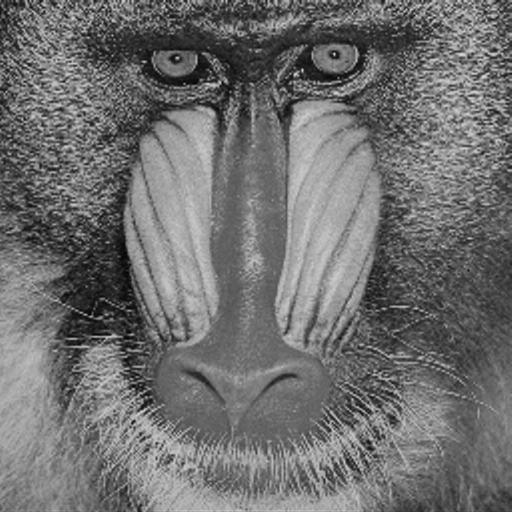}  & \includegraphics[width=1.2cm]{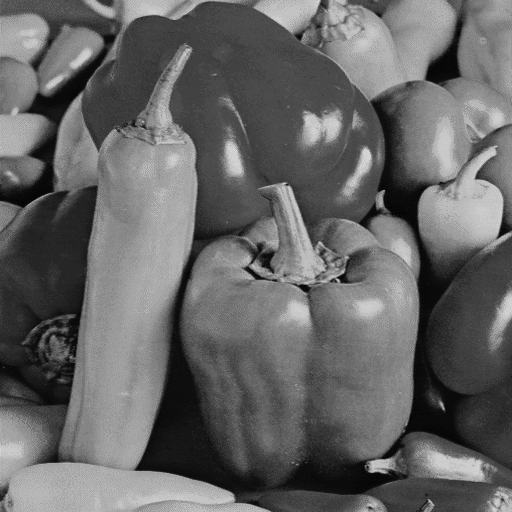} &
  \includegraphics[width=1.2cm]{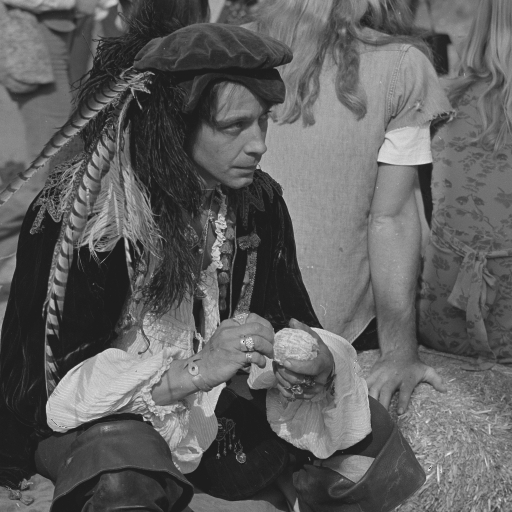} & \includegraphics[width=1.2cm]{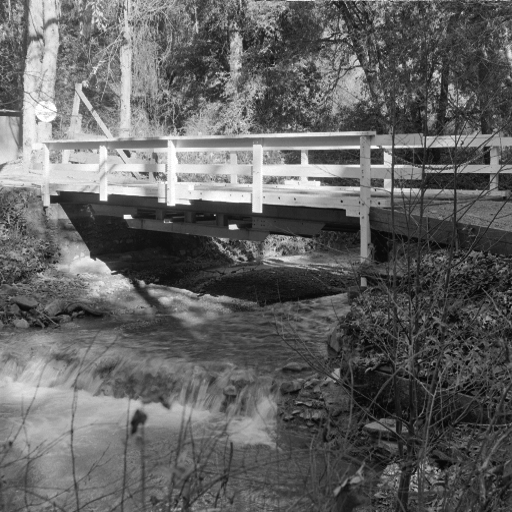} &
  \includegraphics[width=1.2cm]{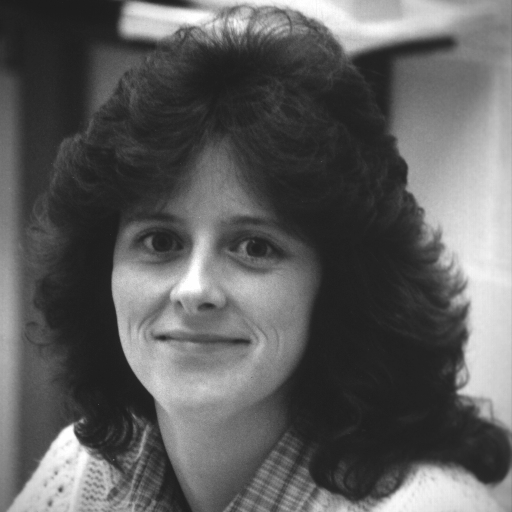} \\
  \includegraphics[width=1.2cm]{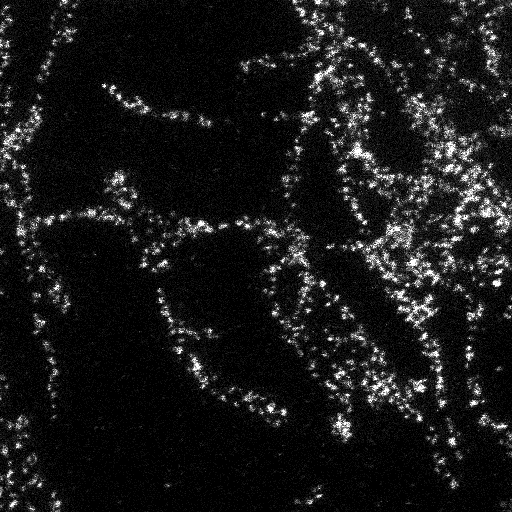}  & \includegraphics[width=1.2cm]{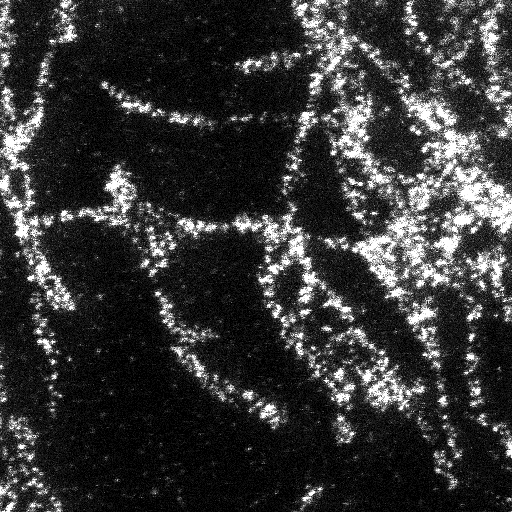} &
  \includegraphics[width=1.2cm]{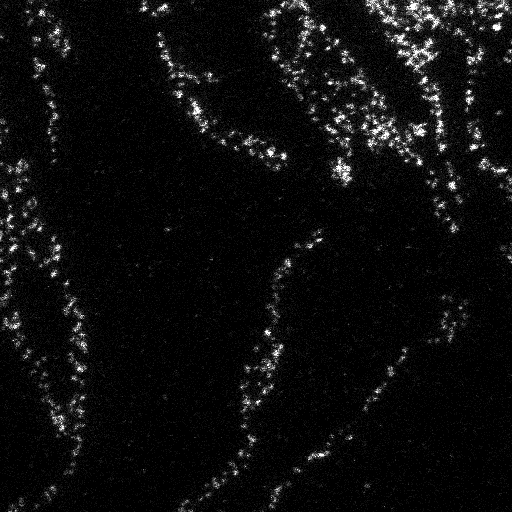} & \includegraphics[width=1.2cm]{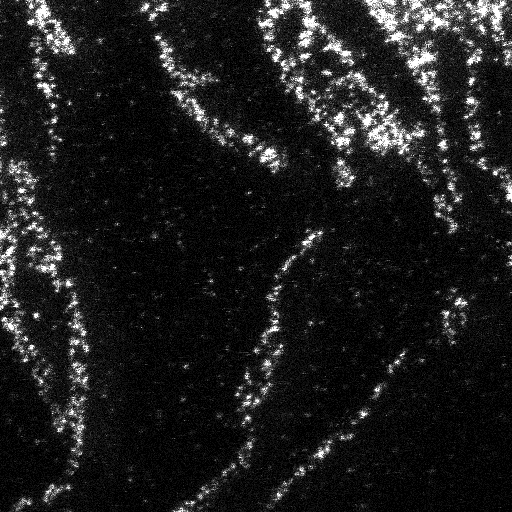} &
  \includegraphics[width=1.2cm]{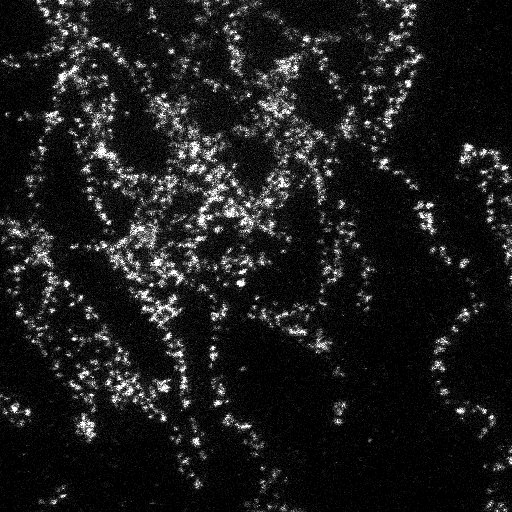} &
  \includegraphics[width=1.2cm]{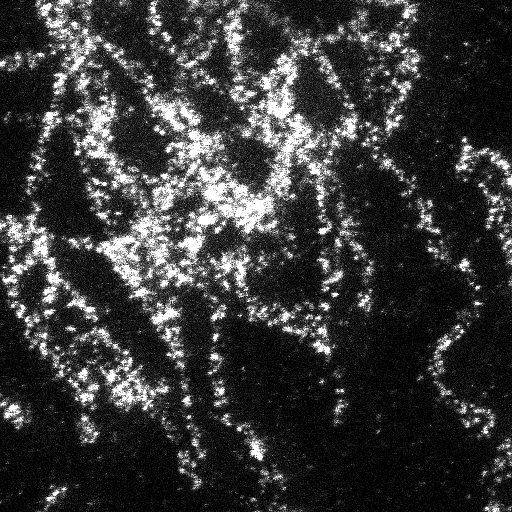}  & \includegraphics[width=1.2cm]{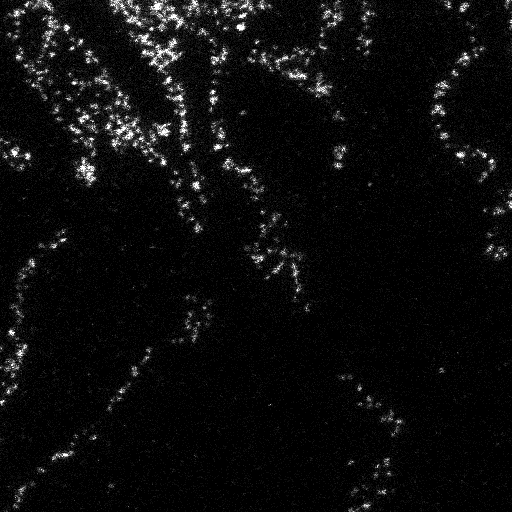} &
  \includegraphics[width=1.2cm]{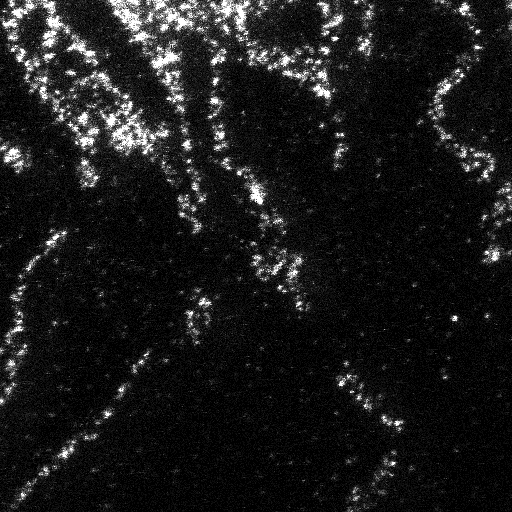} & \includegraphics[width=1.2cm]{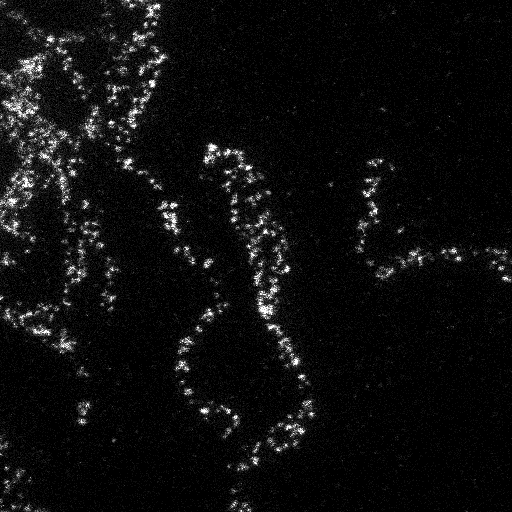} &
  \includegraphics[width=1.2cm]{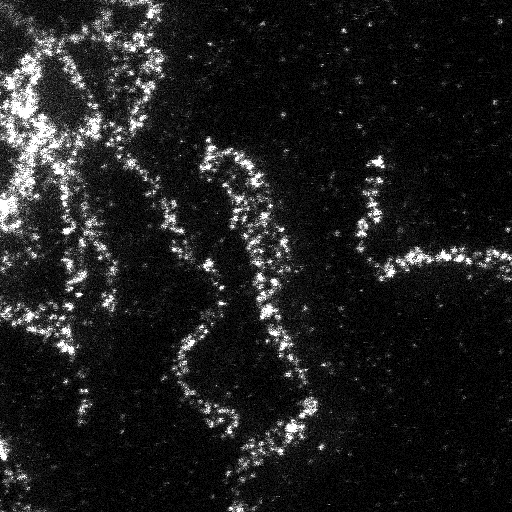} \\
  \end{tabular}
\caption{Classical images and Microscopy images of mitochondria. \label{dot:2}}
\end{table}

\paragraph{Implementation details}
We run our experiments using the Network Simplex as implemented in the Lemon C++ graph library\footnote{\url{http://lemon.cs.elte.hu} (last visited on May, 2nd, 2019)}.
The tests were executed on a Dell workstation equipped with an Intel Xeon W-2155 CPU working at 3.3 GHz and with 32 GB of RAM.
All the code was compiled with the Microsoft Visual Studio 2017 compiler.
Our C++ code is freely available at \url{https://github.com/stegua/dotlib}.

\subsection{Comparison of Minimum Cost Flow Algorithms}
As a first step, we run a set of experiments to select the fastest min cost flow algorithm for our geometric optimal transportation instances. We considered the following codes:

\begin{enumerate}
    \item {\bf EMD}: is the exact solver described in \cite{Rubner1998}, based on an implementation (in ANSI-C) of the transportation simplex method, as described in \cite{Hillier1995}.
    \item We test three different linear programming algorithms implemented in the commercial solver Gurobi v8.1: {\bf Primal Simplex}, {\bf Dual Simplex}, and {\bf Barrier}. The barrier algorithm is the only parallel algorithms able to exploit the 10 physical cores of our CPU.
    \item We test three implementations of Minimum Cost Flow algorithms implemented in the Lemon Graph Library, corresponding to three different combinatorial algorithms: {\bf Cycle Cancelling} \cite{Goldberg1989}, {\bf Cost Scaling} \cite{Goldberg1997,Bunnagel1998}, and the {\bf Network Simplex} \cite{Dantzig2016}. For a deeper and more general comparison with other implementations among combinatorial algorithms, we refer to \cite{Kovacs2015}.
\end{enumerate}

Figure \ref{Tab1} shows the results of our numerical tests. For each method, we compute the distance between any pair of images belonging the the {\it Classic} class (see Figure \ref{dot:2}), with size $32\times32$, for a total of 45 problem instances. As ground distance, we use the Euclidean distance, and we solve the corresponding problem defined on a complete bipartite graph, that is the formulation used by the EMD algorithm.

The Network Simplex as implemented in the Lemon Graph library is clearly the fastest exact algorithm, and is able to solve every instance within a small fraction of a second. For this reason, in all our other numerical tests, we use that implementation.

\begin{figure}[ht]
	\begin{center}
	\includegraphics[width=\textwidth]{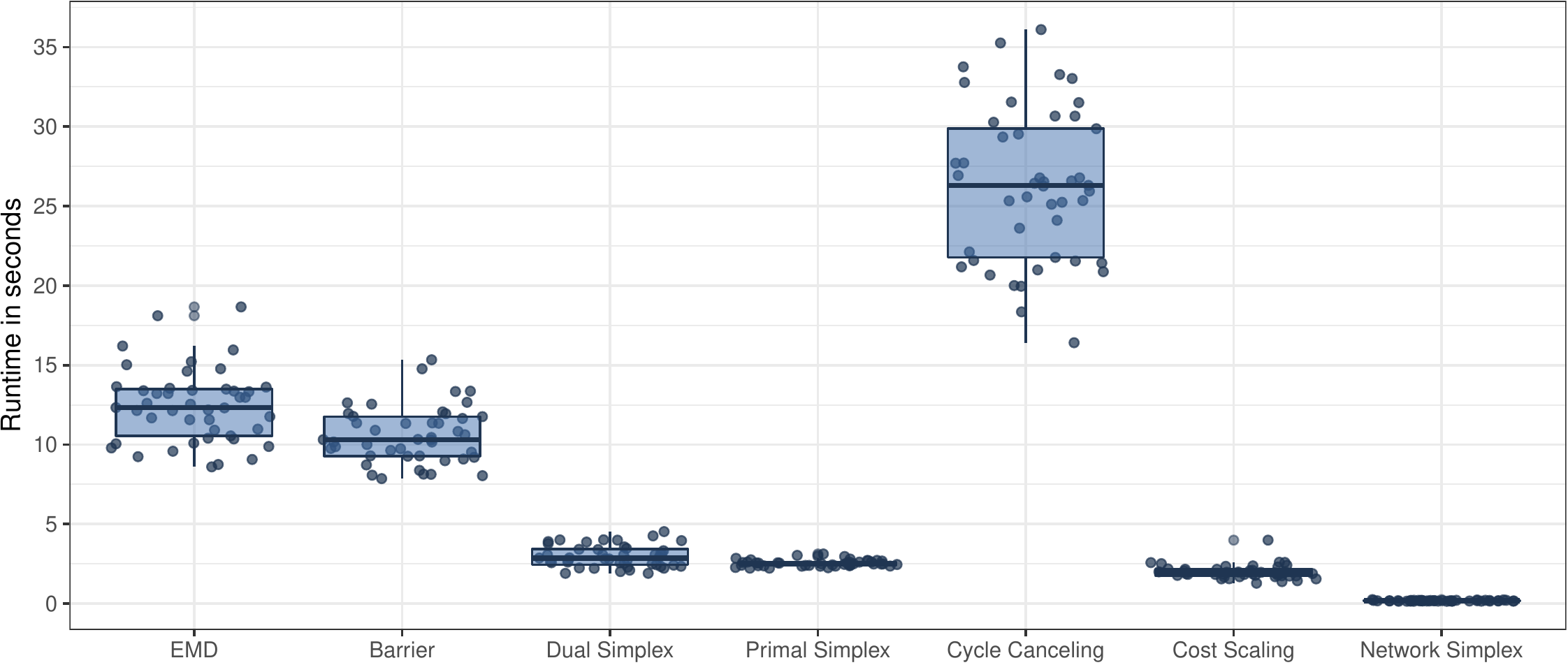}
	\caption{Comparison of different algorithms for solving the Minimum Cost Flow problem. Each single (jittered) dot gives the running time of computing the distance between a pair of images with the corresponding algorithm.}
	\label{Tab1}
	\end{center}
\end{figure}

\subsection{Exact distance computations}
As second step, we have compared the running times of the flow models proposed in this paper computing the exact Wasserstein distance of order 1 using the $d_1$, $d_\infty$ and $d_2$ ground distances.

First, we compare the running times for solving the models proposed in \cref{section:4}, with the running times of solving the equivalent problems formulated on bipartite graphs. Both type of problems are solved with the very efficient implementation of the Network Simplex algorithm provided by the Lemon Graph library.
Figure \ref{Tab3} shows the boxplots that summarize our results on the classical images of size $64 \times 64$. While the bipartite graphs are all of the same size, the problems defined with the $d_1$ distance seem to be simpler, and those with the $d_2$ distance look slightly harder. By using our reduced network flow models, we get a speedup of two orders of magnitude for $d_1$ and $d_\infty$: this is due to the significant reduction of the number of arcs. We recall that the bipartite graphs have $O(n^2)$ arcs, where $n=64 \times 64$, while our equivalent models have only $O(n)$ arcs. However, for the $d_2$ distance, the speedup is still present but more modest. In this case, our models have $O(\frac{6}{\pi^2}n^2)$ arcs, against the $O(n^2)$ of the bipartite graphs. In order to get obtain a more significant speedup while controlling the error for the $d_2$ distance, we have numerically evaluated our approximation scheme, as discussed in the next subsection.

\begin{figure}[ht]
	\begin{center}
	\includegraphics[width=\textwidth]{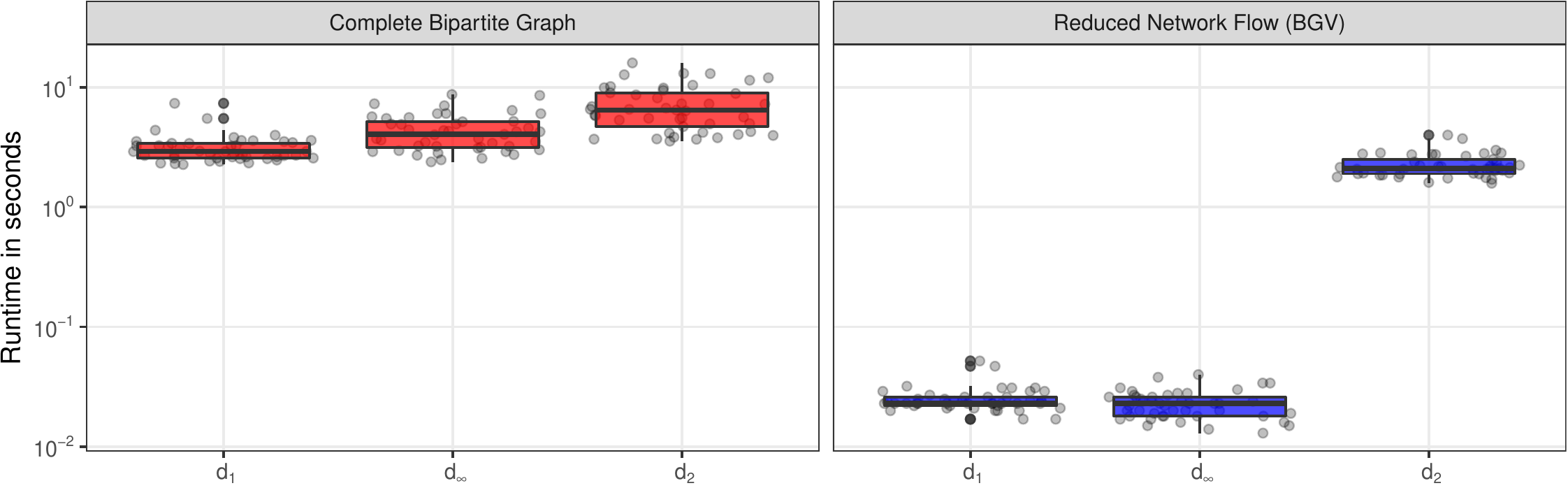}
	\caption{Running times of solving the same transportation instances on bipartite graphs versus using our reduced flow networks for classical images of size $64 \times 64$.}
	\label{Tab3}
	\end{center}
\end{figure}

Once we have completely abandoned the bipartite model, we have run a large set of tests using images of size up to $512\times 512$ with the three different types of ground distances.
Table \ref{tab:2} shows aggregate data elaborated from our results. For each combination of image size and ground distance, the table gives
the size of the flow network in terms of number of nodes $|V|$ and number of arcs $|A|$. Regarding the running times (in seconds), the same table
reports the average, the standard deviation, and the maximum runtime over 450 tests (45 for each of the 10 class of images).

From Table \ref{tab:2}, it is clear that the true challenge is to compute the Wasserstein distances using the $d_2$ ground distance,
since with the $d_1$ and $d_\infty$ ground distances we can solve all the images in at most 1000 seconds (though, in average, it takes 2 or 3 minutes),
while with the $d_2$ ground distance we cannot even solve the instances of size $256\times256$, since the code runs out of memory.

\begin{table}[ht!]
\centering
\begin{tabular}{cr@{\hspace{1em}}r@{\hspace{1em}}c@{\hspace{1em}}r@{\hspace{1em}}r@{\hspace{1em}}r}
Image size & \multicolumn{2}{c}{Graph size} & Ground & \multicolumn{3}{c}{Running times (sec)} \\
$N\times N$ & $|V|$ & $|A|$ & distance & Average & StdDev & Maximum \\
\hline\hline
 $32\times32$
&	1\,024	&	3\,968	&	$d_1$	&	0.002	&	0.001	&	0.009	\\
&		&	7\,812	&	$d_\infty$	&	0.002	&	0.001	&	0.009	\\
&		&	638\,692	&	$d_2$	&	0.075	&	0.016	&	0.154	\\
\hline\noalign{\smallskip}
$64\times64$
&	4\,096	&	16\,128	&	$d_1$	&	0.023	&	0.008	&	0.059	\\
&		&	32\,004	&	$d_\infty$	&	0.024	&	0.006	&	0.053	\\
&		&	10\,205\,236	&	$d_2$	&	2.011	&	0.573	&	4.401	\\
\hline\noalign{\smallskip}
$128\times128$
&	16\,384	&	65\,024	&	$d_1$	&	0.35	&	0.13	&	0.82	\\
&		&	129\,540	&	$d_\infty$	&		0.32	&	0.08	&   0.63	\\
&		&	163\,207\,372	&	$d_2$	&57.73	&	20.17	&	146.33	\\
\hline\noalign{\smallskip}
$256\times256$
&	65\,536	&	261\,120	&	$d_1$	&	6.7	&	3.0	&	23.7	\\
&		&	521\,220	&	$d_\infty$	&	5.5	&	1.7	&	12.9	\\
& \multicolumn{2}{r}{{\it out of memory}} & $d_2$ & - & - & -\\
\hline\noalign{\smallskip}
$512\times512$
&	262\,144	&	1\,046\,528	&	$d_1$	&	182.4	&	100.5	&	1056.5	\\
&		&	2\,091\,012	&	$d_\infty$	&	139.6	&	58.3	&	405.0	\\
& \multicolumn{2}{r}{{\it out of memory}} & $d_2$ & - & - & -\\
 \hline\noalign{\smallskip}
 \end{tabular}
\caption{Running times (in seconds) to compute exact Wasserstein distances. Each row gives the average, standard deviation, and maximum time over 450 instances.\label{tab:2}}
\end{table}

\subsection{Approximation error \texorpdfstring{$\CW_{d_2}$}{W2}: Theory vs. Practice}
In this section, we present the numerical results on the error obtained in practice when we compute the Wasserstein distance of order 1 with our approximation scheme.
The results are presented for the images of size $32 \times 32$, $64 \times 64$ and $128\times128$, for which we can compute both
the exact distance using $F_{G_{N-1}, d_2}$ and the approximate distance using $F_{G_L, d_2}$, with $L$ ranging in the set $\{2,3,5,10\}$.

We measure the relative percentage error, denoted by $\CE_{G_{L}}$, as the following ratio:

$$\CE_{G_{L}} := \frac{F_{G_L, d_2} - F_{G_{N-1}, d_2}}{F_{G_L, d_2}},$$

\noindent where $F_{G_{N-1}, d_2}$ is equal to the optimal Wasserstein distance, and $F_{G_L, d_2}$ is the value of our approximation
obtained by solving the uncapacitated min cost flow problem on the network built according to the parameter $L$.
Our goal is to compare the values of $\CE_{G_{L}}$ with the upper bound $\bar\Gamma_{G_{L}}$ predicted by \cref{PropL2bound}, that is

\[
	\bar\Gamma_{G_{L}} := 1 - \sqrt{\frac12+ \frac{L}{2\sqrt{1+L^2}}} \geq \CE_{G_{L}}.
\]
\cref{tab:3} reports the results for the images of size $32\times32$, $64\times64$ and $128 \times 128$. The columns specify, in order, the image size, the value of
the parameter $L$, the cardinality of the arc set $E_L$, the average runtime in seconds,
the upper bound $\bar\Gamma_{G_{L}}$ guaranteed by \cref{PropL2bound}, the empirical average of the errors $\CE_{G_{L}}$
obtained in practice, and the maximum of such errors.
Note that while from \cref{PropL2bound} in order to get an error smaller than $1\%$ we should use at least $L=5$, in practice, we get
on average such small errors already by using $L=3$. Indeed, we can set the value of parameter $L$ in such a way to achieve
the desired trade off between numerical precision in computing the Wasserstein distances and the running time we accept to wait.
\cref{tab:4} details the maximum error $\CE_{G_{L}}$ obtained for each class of images.
This shows that in practice a better trade off between the numerical precision of the distance and the running time
can be obtained by considering the type of 2D histograms of interest.

\begin{table}[ht!]
\centering
 \begin{tabular}{c@{\hspace{0.5em}}c@{\hspace{0.5em}}r@{\hspace{1em}}r@{\hspace{1em}}c@{\hspace{1em}}c@{\hspace{1em}}c}
Size& Parameter & $|E_L|$ & Runtime & $\bar\Gamma_{G_{L}}$ & mean$\{\CE_{G_{L}}\}$ & $\max \{ \CE_{G_{L}} \}$ \\
\hline\noalign{\smallskip}
$128\times128$
& $L=2$	 & 257\,556 & 0.446 & 2.675\% & 1.291\% & 2.646\%  \\
& $L=3$	 & 510\,556 & 0.748 & 1.291\% & 0.463\% & 1.153\%  \\
& $L=5$	 & 1\,254\,508 & 1.153 & 0.486\% & 0.121\% & 0.447\%  \\
& $L=10$ & 16\,117\,244 & 2.424 & 0.124\% & 0.019\% & 0.109\%  \\
\hline\noalign{\smallskip}
(exact) & $L=127$ &163\,207\,372	& 57.729 & & \\
\hline
\hline
\end{tabular}
\caption{Bound errors: Theory vs. Practice. Each row gives the averages over 450 instances.\label{tab:3}}
\end{table}
\begin{table}[ht!]
\centering
 \begin{tabular}{l@{\hspace{3em}}c@{\hspace{3em}}c@{\hspace{3em}}c@{\hspace{3em}}c}
 & \multicolumn{4}{c}{Maximum Bound percentage error $\CE_{G_{L}}$} \\
Class &	L=2	& L=3 & L=5 & L=10 \\
\hline\noalign{\smallskip}
CauchyDensity	    &	2.576\%	&	1.147\%	&	0.447\%	&	0.109\%	\\
ClassicImages	    &	1.813\%	&	0.716\%	&	0.246\%	&	0.053\%	\\
GRFmoderate	        &	2.365\%	&	0.937\%	&	0.348\%	&	0.065\%	\\
GRFrough	        &	1.918\%	&	0.780\%	&	0.279\%	&	0.039\%	\\
GRFsmooth	        &	2.248\%	&	1.069\%	&	0.338\%	&	0.068\%	\\
LogGRF	            &	2.546\%	&	1.019\%	&	0.370\%	&	0.076\%	\\
LogitGRF	        &	2.332\%	&	1.104\%	&	0.258\%	&	0.067\%	\\
MicroscopyImages	&	2.069\%	&	0.900\%	&	0.272\%	&	0.045\%	\\
Shapes	            &	2.670\%	&	1.168\%	&	0.317\%	&	0.094\%	\\
WhiteNoise	        &	0.908\%	&	0.243\%	&	0.043\%	&	0.005\%	\\
\hline\noalign{\smallskip}
$\bar\Gamma_{G_{L}}$ & 2.675\%	&	1.168\%	&	0.447\%	&	0.109\% \\
\end{tabular}
\caption{Maximum bound errors for each class of images in the benchmark. Each row gives the maximum over 135 instances, for size $N=32\times32, 64\times 64, 128\times 128$.\label{tab:4}}
\end{table}

Motivated by the results shown in \cref{tab:3}, we measured how our approximation scheme scales
for increasing image sizes, using $L\in\{2,3,5,10\}$ and by restricting our test to the Cauchy images, since
they are those with the larger error for $L=5$ and $L=10$.
For each combination of image size and value of $L$, \cref{tab:5} reports the graph size and the basic statistics
on the running time along the same line of the previous tables: average running time along with the respective standard deviations, and maximum running times.

\begin{table}[ht!]
\centering
\begin{tabular}{cc@{\hspace{1em}}r@{\hspace{1em}}r@{\hspace{1em}}r@{\hspace{1em}}r@{\hspace{1em}}r}
Image size & Param & \multicolumn{2}{c}{Graph size} & \multicolumn{3}{c}{Running times (sec)} \\
$N\times N$ & $L$ & $|V|$ & $|E_L|$ &  Average & StdDev & Maximum \\
\hline\noalign{\smallskip}
$256\times256$
& 2  & 65\,536 & 1\,039\,380  & 8.6 & 2.5 & 14.8 \\
& 3  &         & 2\,069\,596  & 8.7 & 2.7 & 20.9 \\
& 5  &         & 5\,129\,836  & 12.9 & 1.3 & 16.8 \\
& 10 &         & 16\,117\,244 & 24.6 & 4.7 & 35.3 \\
\hline\noalign{\smallskip}
$512\times512$
& 2  & 262\,144 & 4\,175\,892  & 170.8 &	97.2 &	487.5 \\
& 3  &          & 8\,333\,404  & 148.0 &	70.6 &	361.6 \\
& 5  &          & 20\,744\,812 & 171.9 &	29.7 &	236.3 \\
& 10 &          & 65\,782\,268 & 471.5 &	139.4 &	950.6 \\
\hline\noalign{\smallskip}
 \end{tabular}
\caption{Attacking larger Cauchy images with $d_2$ ground distance and parameter $L\in\{2,3,5,10\}$. Each row gives the averages over 45 instances.\label{tab:5}}
\end{table}

\subsection{Comparison with other approaches}
We compare our solution method with other two approaches: the first is the algorithm proposed in \cite{LingOkada2007} for the special case using the Manhattan as ground distance. The second approach is the (heuristic, with no guarantees) improved Sinkhorn's algorithm \cite{Cuturi2013} as implemented in \cite{Schmitzer2016}.

\paragraph{Manhattan ground distance \cite{LingOkada2007}} Figure \ref{Tab2} shows the comparison of running time in seconds of our method with the algorithm proposed in \cite{LingOkada2007}. The results refer to the running time for computing the distances between the 45 possible pairs of classical images, at resolution ranging from $32\times32$ up to $512\times512$. Since both methods are exact, the distance value is always the same (indeed, it is the optimum). Note that apart from the very small case of size $32\times32$, our approach is always faster, with the difference increasing for the larger dimensions. While the two methods are using the same network flow model, they are using different solution algorithms: from these results, we can conclude that the Network Simplex implemented in \cite{Kovacs2015} is faster than the implementation of combinatorial algorithm provided by the authors of \cite{LingOkada2007}, despite the fact that it has a lower worst-case time complexity.

\paragraph{Sinkhorn's algorithm \cite{Schmitzer2016}} Figure \ref{Tab3} shows the result of comparing the stabilized Sinkhorn's algorithm described in \cite{Schmitzer2016}, with our approximation scheme for the Euclidean ground distance described in \cref{section:4}. We remark that the Sinkhorn's algorithm, differently from our approach, is a heuristic method and it does not have any guarantee on the optimality gap of the solution it computes.
The results refer to the classical images of size $32\times32$, $64\times64$, and $128\times 128$.
The exact solutions are computed using our approximation scheme with $L=N-1$. For our approximation approach, we used the values $L\in \{2,3,5,10\}$. As expected, the running time grows with increasing values of $L$ and with increasing size of the images (note the log scale on the vertical axis). Our approach, is not only always faster, but, given the same value of empirical optimality gap, it can be two orders of magnitude faster. In addition, we remark that the Network Simplex runs in a single thread, while the implementation of the Sinkhorn's algorithm we use runs in parallel on 20 threads.

\begin{figure}[ht]
	\begin{center}
	\includegraphics[width=\textwidth]{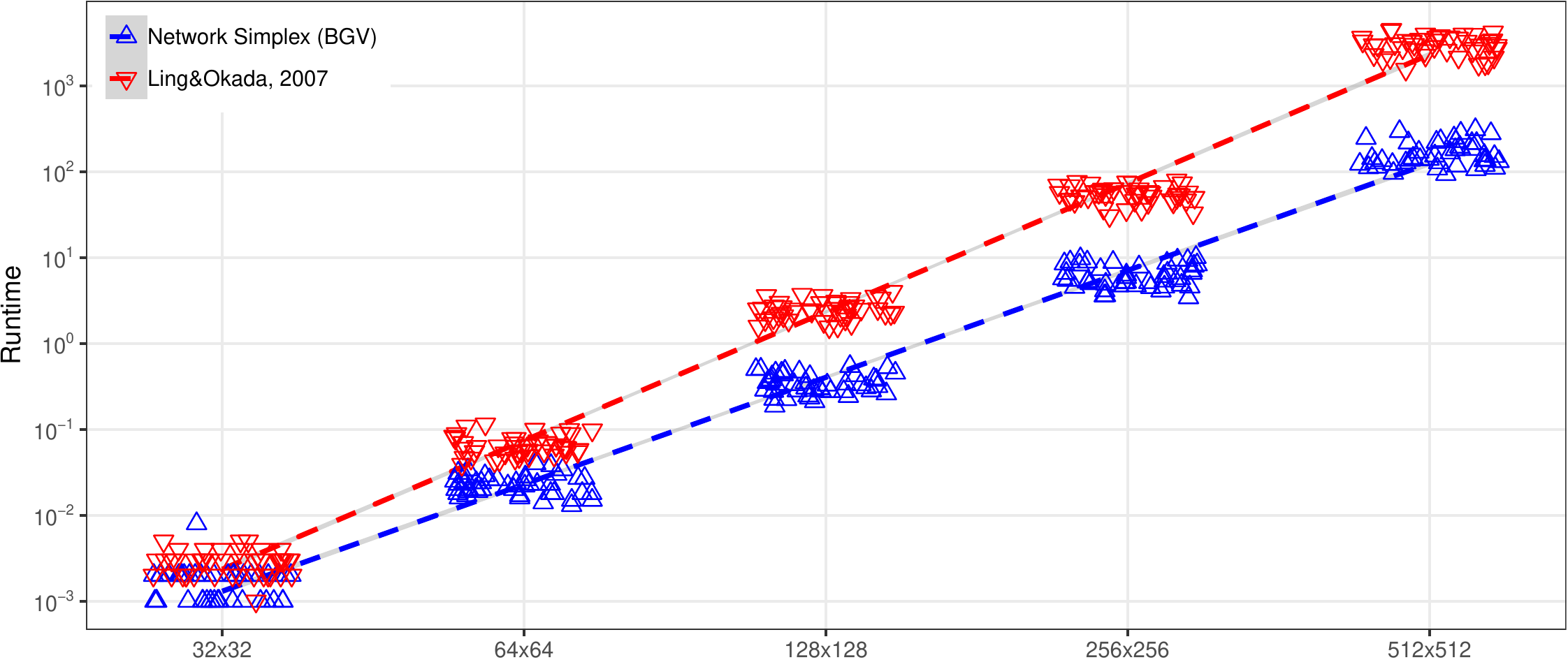}
	\caption{Comparison of runtime between the algorithm proposed in \cite{LingOkada2007} and our approached based on the Network Simplex algorithm using the Classical images.\label{Tab2}}
	\end{center}
\end{figure}

\begin{figure}[ht]
	\begin{center}
	\includegraphics[width=\textwidth]{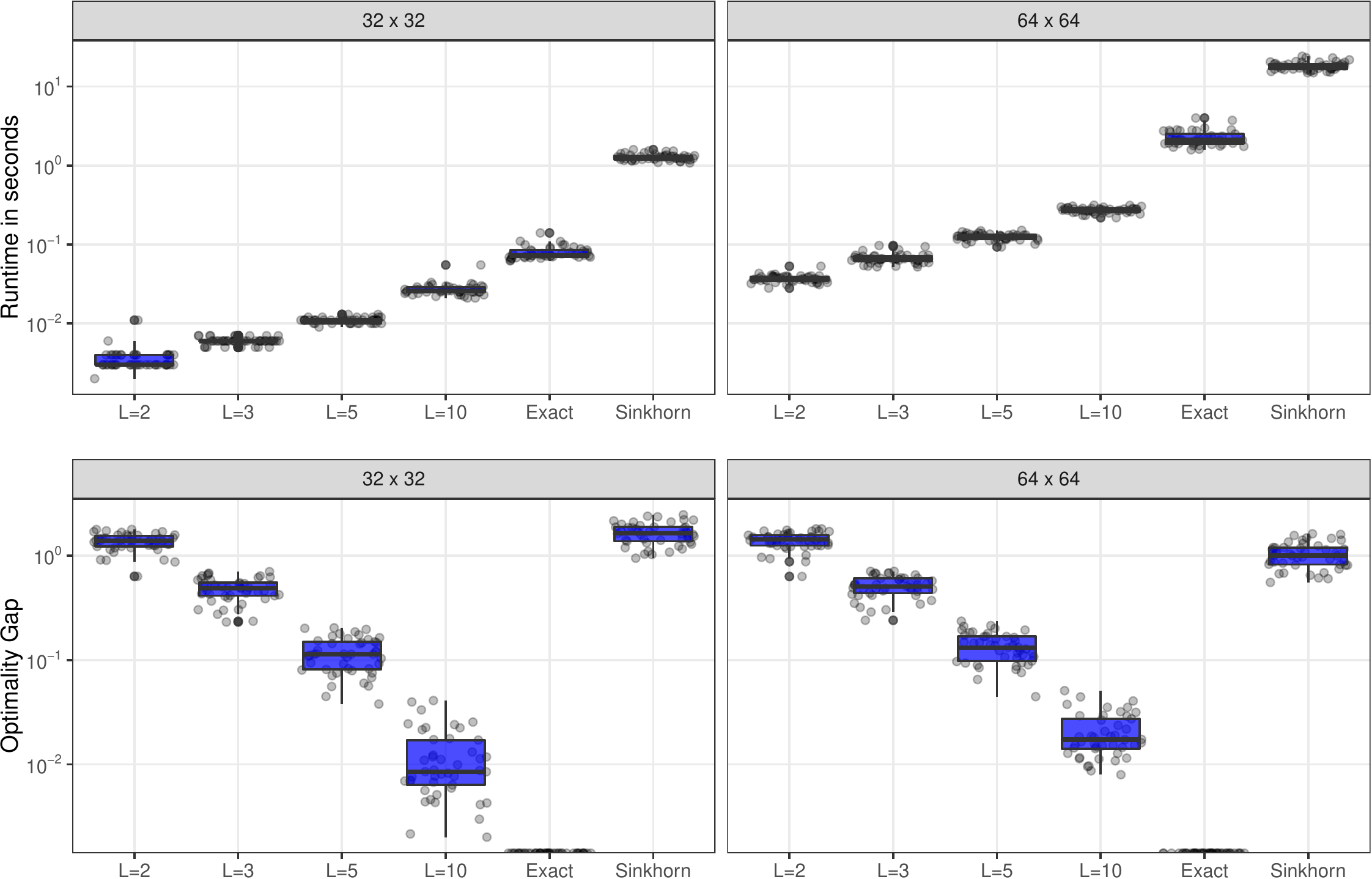}
	\caption{Comparison of runtime and optimality gap between the Sinkhorn's algorithm \cite{Cuturi2013} as implemented in \cite{Schmitzer2016}, and our approximation scheme discussed in \cref{section:4}, using the Classical images of sixe 32x32 and 64x64, with $L\in \{2,3,5,10\}$ (approximate) and $L=N-1$ (exact).\label{Tab5}}
	\end{center}
\end{figure}


\section{Conclusion}\label{section:6}

Comparing two histograms, or establishing which histograms, in a given set, are more alike, is the crucial question in a number of applications with industrial and scientific scope. The staggering availability of data coming, e.g., from the Internet or from biological sampling and imaging, is undoubtedly encouraging the digitalization of data measures (e.g., images), while, at the same time, requiring a computational effort that, as of today, may not be within reach of modern workstations.

Motivated by these considerations, in this article we have addressed the problem of rapidly computing the Kantorovich distance (also known as Wasserstein distance of order 1) between 2D histograms. In our approach, we translate the original discrete optimal transportation problem \cref{eq:kantorovich} to an uncapacitated minimum cost flow problem on a directed graph \cref{bflow}. In \cref{section:3} we prove that the two problems are equivalent when the Kantorovich cost $c$ is a distance.

The key observation is that, by reducing the size of the graph in the minimum cost flow, one can approximate the Kantorovich solution at a lower computational cost. Precisely, when the cost $c$ is the {\it taxicab} or the {\it maximum} distance, we are able to compute the optimal solution on a reduced flow network of size $O(n)$, where $N$ is  number of pixels in the images, i.e., bins of a histogram. When the cost $c$ is the Euclidean distance, the size of the network that yields the exact optimal solution is $O(\frac{6}{\pi^2}n^2)$. Our main contribution, in the Euclidean distance case, is that for any given error $\varepsilon >0$ we can provide a reduced network of size $O(n)$ that yields an approximate solution, which is at most $\varepsilon$ away from the exact one.
With our approximation method we were able to compute the distance between $512 \times 512$ images, with an error lower than 0.12\%, in less than 20 minutes.



\section*{Acknowledgments}
This research was partially supported by the Italian Ministry of Education, University and Research (MIUR):
Dipartimenti di Eccellenza Program (2018--2022) - Dept. of Mathematics ``F. Casorati'', University of Pavia.

We are deeply indebted to Giuseppe Savar\'e, for introducing us to optimal transportation and for many stimulating discussions and suggestions. We thanks Rico Zenklusen for a useful discussion concerning the proof of Proposition 6.
We thank Bernhard Schmitzer for suggesting the best parameters for his code.

\bibliographystyle{siamplain}
\bibliography{references}
\end{document}